\numberwithin{equation}{section}
\newtheorem{theorem}{Theorem}[section]
\newtheorem{lemma}[theorem]{Lemma}
\theoremstyle{definition}
\newtheorem{definition}[theorem]{Definition}
\theoremstyle{remark}
\newcommand{\Div}{\operatorname{div}}
\newcommand{\vq}{\bm{q}}
\newcommand{\vu}{\bm{u}}
\newcommand{\dx}{{\rm d}x}
\newcommand{\dt}{{\rm d}t}
\newcommand{\dxdt}{\dx \ \dt}
\newcommand{\dq}{{\rm d}\vq}
\newcommand{\mus}{\mu^S}
\newcommand{\mub}{\mu^B}
\newcommand{\muns}{\mu_n^S}
\newcommand{\munb}{\mu_n^B}
\newcommand{\vc}[1]{{\bm{#1}}}
\newcommand{\R}{\mathbb{R}}
\newcommand{\N}{\mathbb{N}}
\begin{document}

\title[On a Free Boundary Problem for a Model of Polymeric Fluids]{Weak Dissipative solutions to a free-boundary problem for finitely extensible bead-spring chain molecules: variable viscosity coefficients}

\author[Donatelli]{D.\ Donatelli}
\address[Donatelli]{\newline
Departement of Information Engineering Computer Science and Mathematics\\
University of L'Aquila\\
67100 L'Aquila, Italy.}
\email[]{\href{donatella.donatelli@univaq.it}{donatella.donatelli@univaq.it}}
\urladdr{\href{http://people.disim.univaq.it/~donatell/}{http://people.disim.univaq.it/\~{}donatell}}

\author[Thorsen]{T.\ Thorsen}
\address[Thorsen]{\newline
Department of Mathematics \\ University of Maryland \\ College Park, MD 20742-4015, USA.}
\email[]{\href{http://www.math.umd.edu}{tthorsen@math.umd.edu}}

\author[Trivisa]{K. \ Trivisa}
\address[Trivisa]{\newline
Department of Mathematics \\ University of Maryland \\ College Park, MD 20742-4015, USA.}
\email[]{\href{http://www.math.umd.edu}{trivisa@math.umd.edu}}
\urladdr{\href{http://www.math.umd.edu/~trivisa}{math.umd.edu/\~{}trivisa}}

\date{\today}

\subjclass[2010]{Primary: 35Q30, 76N10; Secondary: 46E35.}

\keywords{FENE model; suspensions of  extensible bead-spring chain molecules; dilute polymers; compressible Navier-Stokes equations; Fokker-Planck-type equation, free boundary problems.}

\thanks{The research of D.D. and T.T. leading to these results was supported by the European Union's Horizon 2020 Research and Innovation Programme under the Marie Sklodowska-Curie Grant Agreement No 642768 (Project Name: ModCompShock). K.T. gratefully acknowledges the support by the National Science Foundation under the award DMS-1614964.}

\maketitle

\begin{abstract}
We investigate the global existence  of weak
solutions to a free boundary problem governing the evolution of  finitely extensible bead-spring chains in dilute polymers. The free boundary in the present context is defined with regard to  a density threshold of $\rho = 1,$ below which the fluid is modeled as compressible and above which the fluid is modeled as incompressible. 
The present article focuses on the  physically relevant case in which the viscosity coefficients present in the system depend on the  polymer number density, extending the earlier work \cite{DT2018}. 
We construct the weak solutions of
the free boundary problem  by performing the asymptotic limit as the adiabatic exponent $\gamma$
 goes to $\infty$ for the macroscopic model introduced by Feireisl, Lu and S\"{u}li in \cite{FLS} (see also \cite{BS2016}).
The weak sequential stability of the family of dissipative (finite energy) weak solutions to the free boundary problem is also established.

\end{abstract}

\tableofcontents{}

\section{Introduction}
Micro--macro models of dilute polymeric fluids  are typically derived using principles from  statistical physics and are based on the coupling of the Navier--Stokes system to the Fokker--Planck equation. 
This coupling demands naturally the development of new analytical techniques and  multiscale methods to analyze the flow of rheologically complex fluids. 
The multiscale models of such viscoelastic fluids bridge directly the microscopic scale of kinetic theory and the macroscopic scale of continuum mechanics.
In these models polymer molecules are idealized as chains of massless
beads, linearly connected with inextensible rods or elastic springs.

In the present work, we   investigate a free boundary problem for a polymeric fluid, defined by means of a pressure threshold above which the fluid is taken to be incompressible, and below which the fluid is compressible (cf.
Lions and Masmoudi \cite{LionsMasmoudi-1999}).   In \cite{DT2017}, \cite{DT2018} Donatelli and Trivisa established existence of weak solutions to such free boundary problems for two distinct models of polymeric fluids, the Doi model and the FENE model, both of which consider a dilute solution of polymers in a fluid solvent. In the Doi model the polymers are taken to be inflexible rods, while in the FENE model the polymers are modeled as flexible chains of beads connected by finitely extensible, nonlinear, elastic springs. The microscopic models governing the evolution of the polymers are coupled with the macroscopic model for the fluid solvent, in this case the Navier-Stokes equations.

 In a series of papers (\cite{BS2011}-\cite{BS2016}) Barrett and S\"{u}li proved existence of weak solutions to an initial boundary value problem for the FENE model in the case of  both incompressible and compressible solvents. Bae and Trivisa proved existence of weak solutions to the Doi model in both the compressible \cite{BT2012} and incompressible \cite{BT2013} cases. Recently, Feireisl, Lu, and S\"{u}li \cite{FLS} proved a weak sequential stability result for the compressible FENE model when the viscosity coefficients for the solvent are dependent on the polymer number density. 

Motivated by physical considerations,  the goal of this paper is to present well-posedness results for a free boundary problem 
derived from the  Navier-Stokes-Fokker-Plack system for polymeric fluids by taking the limit as the adiabatic exponent $\gamma$  approaches $\infty$ in the case of variable viscosity coefficients.
More precisely, the viscosity coefficients under consideration depend on the polymer number  density  as in \cite{FLS}.

The main ingredients of our approach can be formulated as follows:
\begin{itemize}
\item A suitable variational formulation of the underlying physical principles based on the dissipation of energy.
\item Physically grounded structural hypotheses imposed on the viscous stress tensor as well as the elastic extra-stress tensor in the system.
\item Extension of the multipliers technique of Lions, which now requires new delicate estimates in order to accommodate the variable viscosities and the loss of regularity of the sequence of  approximate velocities ${u_n}.$
\end{itemize}

The main contribution to the existing theory, and the principal new difficulties to
be dealt with can be characterized as follows:
\begin{itemize}
\item  We construct a sequence of approximating problems $(\vc{P_n})$. These approximating problems will be taken to be the compressible problem described in Section 2, with adiabatic exponents $\gamma_n$ such that $\gamma_n \to \infty$,
\item We utilize stability results for the compressible problem (see \cite{FLS}) to demonstrate convergence of the approximating solutions to the solution of the problem $(\vc{P_F})$.
\item In order to accommodate the variable viscosity coefficients and the loss of regularity for the approximate velocity sequence ${u_n}$ delicate commutator estimates need to be established.
\end{itemize}


The paper is structured as follows. Section \ref{sec:model}  presents the modeling assumptions and governing equations for the polymeric fluid, along with notation and definitions that will be used throughout the paper. Section \ref{sec:FBP} introduces  the free boundary problem and the notion of a weak solution as well as  the main existence result (Theorem \ref{thm:FBexist}). 
Section \ref{sec:approx} is dedicated to the construction of approximating problems, presents the notion of their solutions, and states the existence result for these solutions.
 Section \ref{sec:proof} presents the proof of the main result (Theorem \ref{thm:FBexist}), which  is a consequence of Theorem \ref{thm:stab}. The rest of the section is dedicated to proving Theorem \ref{thm:stab}, which involves obtaining: 
(a) a priori estimates via an energy inequality for the approximating problems; (b)  a uniform $L^1$ bound on the quantity $\rho_n^{\gamma_n}$; (c)  the convergence results stated in Section \ref{sec:conv}, which 
are established as a consequence of these two ingredients; (d)  the free boundary conditions satisfied by the limiting solution (Subsection \ref{sec:FBC}).
Finally, Section \ref{SWSS}  presents  Theorem \ref{WSS} which establishes the weak sequential stability of  the family of dissipative solutions to the free boundary problem Problem $(\vc{P_F}).$

\section{Modeling}\label{sec:model}
We first consider a model for a general polymeric fluid consisting of a compressible, isothermal, barotropic, viscous Newtonian fluid solvent in a solution with polymers modeled as flexible bead-spring chains. We make several assumptions:
\begin{itemize}
\item [(i)] The fluid occupies a bounded Lipschitz domain $\Omega \subset \R^3$.
\item [(ii)] The polymers are modeled as linear chains of $K + 1$ beads connected by $K$ finitely extensible, nonlinear, elastic (FENE) springs.
\item [(iii)] The polymer solution is dilute.
\item [(iv)] The drag coefficient $\zeta = 1$ is constant.
\item [(v)] There are no external body forces acting on the fluid.
\end{itemize}
Under these assumptions, the evolution of the fluid is modeled by the compressible Navier-Stokes equations
\begin{equation*}
\begin{gathered}
\partial_t \rho + \Div_x(\rho \vu) = 0,\\
\partial_t(\rho \vu) + \Div_x(\rho \vu \otimes \vu) + \nabla_x p(\rho) - \Div_x \mathbb{S} = \Div_x \mathbb{T},
\end{gathered}
\end{equation*}
where $\vu$ is the fluid velocity, $\rho$ is the fluid density, $p$ is the fluid pressure, $\mathbb{S}$ is the viscous stress tensor, and $\mathbb{T}$ is an elastic extra-stress tensor. The addition of the term involving the elastic extra-stress tensor $\mathbb{T}$ is due to the fluid-polymer interactions. 
We assume the following pressure law
\[
p(\rho) = \rho^{\gamma},
\]
while the viscous stress tensor $\mathbb{S}$ is the Newtonian stress tensor defined by
\begin{equation*}
\mathbb{S}[\eta, \vu] = \mus\left[\frac{\nabla_x \vu + \nabla_x^T \vu}{2} - \frac{1}{3}( \Div_x \vu) \mathbb{I}\right] + \mub(\Div_x \vu) \mathbb{I}.
\end{equation*}
Here, $\mu^S$ and $\mu^B$ are the shear and bulk viscosity coefficients, respectively. In previous studies (\cite{DT2017}), these coefficients have been taken to be constants. The present article treats the physically relevant case in which the viscosity coefficients are functions of the polymer number density which brings addtional stumbling blocks in the analysis of the problems of existence and weak sequential stability.

Each spring in the chain can be modeled by a conformation vector $\vq_i$, which represents the orientation and extension of the spring. Since the springs are finitely extensible, each spring has a maximal extension length $r_i^{1/2}$, so each conformation vector $\vq_i$ belongs to the domain $D_i = B(0, r_i^{1/2}) \subset \R^3$. Then, the entire chain can be modeled by the conformation vector $\vq = (\vq_1^T, ..., \vq_K^T)^T$, which belongs to the domain $D = D_1 \times \hdots \times D_K$. Additionally, there is a spring potential $U_i \in C^1([0, \frac{r_i}{2}))$  associated with each spring such that $U_i(0) = 0$, $\lim_{s \to \frac{b_i}{2}}U_i(s) = \infty$. The $i^{th}$ partial Maxwellian $M_i(\vq_i)$ is defined by
\begin{equation*}
M_i(\vq_i) = \frac{1}{\mathcal{Z}_i} e^{-U_i'\left(\frac{|\vq_i|^2}{2}\right)}, \ \ \mathcal{Z}_i = \int_{D_i}e^{-U_i'\left(\frac{|\vq_i|^2}{2}\right)},
\end{equation*}
while the total Maxwellian $M(\vq)$ is given by
\begin{equation*}
M(\vq) = \prod_{i = 1}^K M_i(\vq_i).
\end{equation*}
The polymer probability density function $f = f(t, x, \vq)$ is defined such that $f(t, x, \vq) \dq$ denotes the probability that a polymer with center of mass $x$ at time $t$ has a conformation vector $\vq$ in the domain $\dq$. The evolution of the polymer probability density function $f$ is then governed by the Fokker-Planck equation 
\begin{align}\label{eq:1}
\begin{split}
&\partial_t f + \Div_x(f\vu) + \sum_{i = 1}^K \Div_{\vq_i}((\nabla_x \vu)\vq_i \ f) \\
& \quad \quad= \epsilon \Delta_x f + \frac{1}{4\lambda}\sum_{i = 1}^K \sum_{j = 1}^K A_{ij} \Div_{\vq_i}\left(M \nabla_{\vq_j}\left(\frac{f}{M}\right)\right),
\end{split}
\end{align}
where, $\epsilon$ is a center-of-mass diffusion coefficient, $\lambda$ is the Deborah number which characterizes the elastic relaxation of the fluid, and $A = (A_{ij})_{i,j = 1}^K$ is the positive-definite Rouse matrix, which describes the connectivity of the bead-spring chain. For a more thorough derivation of \eqref{eq:1}, see \cite{BS2012B}. We define the polymer number density $\eta(t, x)$ by
\begin{equation*}
\eta(t, x) = \int_D f(t, x, \vq) \ {\rm d} \vq,
\end{equation*}
which obeys an advection-diffusion equation
\begin{equation*}
\partial_t \eta + \Div_x(\eta \vu) = \epsilon\Delta_x \eta,
\end{equation*}
obtained by a formal integration of \eqref{eq:1} over the domain $D$ under the zero-penetration boundary conditions \eqref{eq:FBPBC2}. Additionally, we assume that the elastic extra-stress tensor has the form
\begin{equation*}
\mathbb{T} = \mathbb{T}_1 + \mathbb{T}_2,
\end{equation*}
where
\begin{equation*}
\mathbb{T}_1 = k \left[\sum_{i = 1}^K \mathbb{C}_i(f) - (K + 1) \left(\int_D f \ d\vq\right)\mathbb{I}\right]
\end{equation*}
is the standard Kramer's expression and
\begin{equation*}
\mathbb{T}_2 =\left( \int_{D\times D}\gamma(\vq, \vq')f(t, x, \vq)f(t, x, \vq') \ d\vq \ d\vq'\right)\mathbb{I}.
\end{equation*}
Here, $\gamma$ is an interaction kernel and
\begin{equation*}
\mathbb{C}_i(f) := \int_D fU_i'\left(\frac{|\vq_i|^2}{2}\right)\vq_i\vq_i^T \ d\vq.
\end{equation*}
Under the assumption that $\gamma = \delta > 0$ is a constant, the extra-stress tensor reduces to the form
\begin{equation*}
\mathbb{T} =  k\sum_{i = 1}^K \mathbb{C}_i(f) - \left(k(K + 1) \eta + \delta \eta^2\right)\mathbb{I}.
\end{equation*}

\subsection{Notation and Definitions}
Here we summarize notation that will be used throughout the paper.
\begin{itemize}
\item $\tilde{f} = f/M$.
\item $A_0$ is the smallest eigenvalue of the Rouse matrix $A$.
\item $\mathcal{F}(s) = s(\log s - 1) + 1$.
\item $\mathcal{M}((0, T)\times \Omega)$ is the space of bounded measures on $(0, T)\times \Omega$.
\item $\mathbb{I}$ denotes the $3\times 3$ identity tensor.
\item We write $x \lesssim y$ when there exists a constant $C$ such that $x \leq Cy$, and we write $x \lesssim_T y$ when there exists a constant $C(T)$, dependent only on time, such that $x \leq C(t) y$.
\item We use $\to$ to denote strong convergence and $\rightharpoonup$ to denote weak convergence.
\item $L_M^r(D)$ is the Maxwellian-weighted Lebesgue space defined by the norm
\[
\|f\|_{L_M^r(D)} = \left(\int_0^T M|f|^r \ d\vq\right)^{1/r}.
\]
Similarly, we define $L_M^r(\Omega \times D) = L^r(\Omega; L_M^r(D))$.
\item $Z_r = \{f \in L_M^r(\Omega \times D); f \geq 0 \text{ a.e. on }\Omega \times D\}$.
\item $M^{-1}(H^s(\Omega \times D))'= \{M^{-1}f: f \in (H^s(\Omega \times D))'\}$, where $M$ is the Maxwellian.
\item $C_w(0, T; X)$ is the space of weakly continuous functions over $X$, i.e. the space of functions  $v \in L^\infty(0, T; X)$ such that, for all $w \in X'$, the mapping $t \mapsto\langle w, v(t)\rangle_X$ is continuous.
\end{itemize}
Additionally, we define several operators which will be used in the analysis of the problem.
\begin{definition} For $p \in (1, \infty)$ and $\Omega \subset \R^3$ a bounded Lipschitz domain, the bounded linear operator $\mathcal{B}: L_0^p(\Omega) \to W_0^{1,p}(\Omega)$ is defined such that
\[
\Div_x \mathcal{B}(f) = f, \ \ \|\mathcal{B}(f)\|_{W_0^{1,p}(\Omega)} \leq c(p, \Omega) \|f\|_{L^p(\Omega)} \text{ for all }f \in L_0^p(\Omega),
\]
where $L_0^p(\Omega)$ is the space of all $L^p(\Omega)$ functions with zero mean. Additionally, if $f = \Div_x g$ for $g \in L^q(\Omega)$, where $q \in (1, \infty)$ and $g \cdot n = 0$ on $\partial \Omega$, it follows that
\[
\|\mathcal{B}(f)\|_{L^q(\Omega; \R^3)} \leq c(p, \Omega) \|g\|_{L^q(\Omega; \R^3)}.
\]
\end{definition}

\begin{definition}[Riesz Operator] We define the following operators on $\R^3$:
\[
\mathcal{R}_{ij} = \partial_i \partial_j \Delta^{-1}, \ \ \mathcal{A}_j = -\partial_j \Delta^{-1}.
\]
Then we have the following properties:
\begin{itemize}
\item [(i)] $\mathcal{R}_{ij} = -\partial_i \mathcal{A}_j$,
\item [(ii)] $\sum_j\mathcal{R}_{jj} = -\sum_j \partial_j \mathcal{A}_j = \mathbb{I}$,
\item [(iii)] For any $p \in (1, \infty)$, $R_{ij}$ is a bounded operator from $L^p(\R^3)$ to $L^p(\R^3)$.
\item [(iv)] For any $p \in (1, \infty)$, $u \in L^p(\R^3), v \in L^q(\R^3)$, such that $\frac{1}{p} + \frac{1}{q} = 1$, we have
\[
\int_{\R^3} \mathcal{R}_{ij}[u]v \dx = \int_{\R^3} u \mathcal{R}_{ij}[v].
\]
\item [(v)] For any $p \in (1, 3)$, $\mathcal{A}_j$ is a bounded operator from $L^p(\R^3)$ to $L^{\frac{3p}{3 - p}}(\R^3)$.
\end{itemize}
\end{definition}
Here, we also note that the partial derivatives $\partial_i, \partial_j$ commute with the inverse Laplacian operator $\Delta^{-1}$, so we can write
\[
\mathcal{R}_{ij} = \Delta^{-1} \partial_i \partial_j, \ \ \mathcal{A}_j = -\Delta^{-1} \partial_j.
\]

\section{Free Boundary Problem} \label{sec:FBP}
We now define a free boundary problem for the polymeric fluid model described in the previous section. We implement a density threshold of $\rho = 1$, below which the fluid is modeled as compressible and above which the fluid is modeled as incompressible. In the compressible regime we assume that the fluid pressure $\pi$ vanishes.
\subsection{Governing Equations}
The free boundary problem ($\vc{P_F}$) is defined by the system of governing equations
\begin{equation}\label{eq:FBcont}
\partial_t \rho + \Div_x(\rho \vu) = 0
\end{equation}
\begin{equation}\label{eq:FBmom}
\partial_t(\rho \vu) + \Div_x(\rho \vu \otimes \vu) + \nabla_x \pi - \Div_x \mathbb{S} = \Div_x \mathbb{T} ,
\end{equation}
\begin{align}\label{eq:FBFP}
\begin{split}
&\partial_t f + \Div_x(f\vu) + \sum_{i = 1}^K \Div_{\vq_i}((\nabla_x \vu)\vq_i \ f)\\
& \quad \quad = \epsilon \Delta_x f + \frac{1}{4\lambda}\sum_{i = 1}^K \sum_{j = 1}^K A_{ij} \Div_{\vq_i}\left(M \nabla_{\vq_j}\left(\frac{f}{M}\right)\right),
\end{split}
\end{align}
\begin{equation}\label{eq:FBdiff}
\partial_t \eta + \Div_x (\eta \vu) = \epsilon \Delta_x \eta,
\end{equation}
supplemented with the free boundary conditions
\begin{equation}\label{eq:FBC1}
\Div_x \vu = 0 \text{ a.e. on }\{\rho = 1\},
\end{equation}
\begin{equation}\label{eq:FBC2}
\pi \geq 0 \text{ a.e. on }\{\rho = 1\},
\end{equation}
\begin{equation}\label{eq:FBC3}
\pi = 0 \text{ a.e. on }\{\rho < 1\}.
\end{equation}
The nonconstant viscosity coefficients $\mu^S, \mu^B$ are now functions of the polymer number density $\eta$ and are defined by
\[
\mu^S = \phi_S(\eta),\ \ \mu^B = \phi_B(\eta)
\]
for some functions $\phi_S, \phi_B \in C^1([0, \infty))$. We fix the following growth conditions:
\begin{equation}\label{eq:omegacond}
\begin{gathered}
c_1(1 + s)^\omega \leq \phi_S(s) \leq c_2(1 + s)^\omega, \ \ |\phi_S'(s)| \leq c_3 + c_4(1 + s)^{\omega - 1}, \\
\ \ 0 \leq \phi_B(s) \leq c_5(1 + s)^\omega,
\end{gathered}
\end{equation}
where $\omega \in \R$ and $c_1, ..., c_5$ are positive real constants. These growth constraints allow us to bound the viscosity coefficients $\mu^S, \mu^B$ along with their inverses. Such bounds are necessary to obtain regularity results for the fluid velocity.
\subsubsection{Boundary Conditions}
We define $\partial \overline{D}_i = D_1 \times \hdots \times D_{i - 1} \times  \partial D_i \times D_{i + 1} \times\hdots \times D_K$ and note that $\vq_i$ is normal to $\overline{D}_i$. We impose the boundary conditions
\begin{equation} \label{eq:FBPBC1}
\vu = 0 \text{ on }\partial \Omega,
\end{equation}
\begin{align}\label{eq:FBPBC2}
\begin{split}
&\left[\frac{1}{4\lambda} \sum_{j = 1}^K A_{ij}\nabla_{\vq_j}\left(\frac{f}{M}\right) -( \nabla_x \vu)\vq_i \ f\right]\cdot \frac{\vq_i}{|\vq_i|} = 0, \\
& \quad \quad \quad \text{ on } \Omega \times \partial \overline{D}_i \times (0, T], \ \ i = 1, ..., K,
\end{split}
\end{align}
and
\begin{equation}\label{eq:FBPBC3}
\nabla_x f \cdot \bm{n} = 0,\ \  \nabla_x\eta \cdot \bm{n} = 0 \text{ on } \partial \Omega \times D \times (0, T],
\end{equation}
where $\bm{n}$ is normal to $\partial \Omega$.

\subsubsection{Initial Data}
The system is also supplemented with initial data 
\[
\rho_0 = \rho|_{t = 0},\ \  m_0 = \rho\vu|_{t = 0}, \ \ f_0 = f|_{t = 0}, \ \ \eta_0 = \eta|_{t = 0}. 
\]
We require that the initial data satisfy the following conditions
\begin{equation} \label{eq:FBIC}
\begin{gathered}
0 \leq \rho_0 \leq 1 \text{ a.e. in }\Omega, \ \ \rho_0 \in L^1(\Omega), \ \ \rho_0 \not\equiv 0,\\
\fint_\Omega \rho_0 = V < 1,\\
m_0 \in L^{2}(\Omega),\ \  m_0 = 0 \text{ a.e. on } \{\rho_0 = 0\},\\
\rho_0|\vu_0|^2 \in L^1(\Omega),\\
\vu_0 = \frac{m_0}{\rho_0} \text{ on }\{\rho_0 > 0\}, \ \ \vu_0 = 0 \text{ on } \{\rho_0 = 0\},\\ 
f_0 \geq 0 \text{ a.e. in } \Omega \times D,\ \  f_0\log \frac{f_0}{M} \in L^1(\Omega \times D),\\
\eta_0 = \int_D f_0 \text{ a.e. in }\Omega, \ \ \eta_0 \in L^2(\Omega).
\end{gathered}
\end{equation}

\subsection{Weak Dissipative Solutions and Main Result}
We are now able to rigorously define a weak solution to the free boundary problem ($\vc{P_F}$).
\begin{definition}[Weak  Dissipative Solution to ($\vc{P_F}$)] \label{def:FBsol} A vector $(\rho, \vu, \pi, f)$ is a weak dissipative solution to the problem ($\vc{P_F}$) provided
\begin{itemize}
\item [(i)] The following regularity results hold:
\begin{gather*}
\rho \in C([0, T]; L^p(\Omega)), \ 1 \leq p < \infty,\\
\vu \in L^r(0, T; W_0^{1,r}(\Omega)) \text{ for some }r > 1, \ \ \rho |\vu|^2 \in L^\infty(0, T; L^1(\Omega)),\\
\pi \in \mathcal{M}((0, T)\times \Omega),\\
\eta \in L^\infty(0, T; L^2(\Omega)) \cap L^2(0, T; {H}^1(\Omega)),\\
\tilde{f} \in L^p(0, T; Z_1) \cap H^1(0, T; M^{-1}(H^s(\Omega \times D))'), \ \ 1\leq p < \infty, \ \ s > 1+\frac{1}{2}(K + 1)d
\end{gather*}
and $\pi$ is sufficiently regular that the condition $\pi(\rho - 1) = 0$ is satisfied in the sense of distributions.
\item [(ii)] The equations \eqref{eq:FBcont} - \eqref{eq:FBdiff} are satisfied in the sense of distributions.
\item [(iii)] The divergence-free condition $\Div_x \vu = 0$ is satisfied a.e. on $\{\rho = 1\}$.
\item [(iv)] The constraint $0 \leq \rho \leq 1$ is satisfied a.e. in $(0, T)\times \Omega$.
\item [(v)] In addition the weak solutions are dissipative in the sense that they satisfy the energy inequality
\begin{align} \label{eq:FBenergy}
\begin{split}
&\int_\Omega \left[\frac{1}{2}\rho|\vu|^2 + \delta \eta^2 + k \int_D M \mathcal{F}(\tilde{f}) \ {\rm d}\vq\right](t, \cdot ) \dx\\
& \quad + \int_0^t \int_\Omega \mus\left|\frac{\nabla_x \vu + \nabla_x^T\vu}{2} - \frac{1}{3}(\Div_x \vu)\mathbb{I}\right|^2 + \mub|\Div_x\vu|^2 \dxdt' \\
& \quad + 2\epsilon\delta \int_0^t \int_\Omega |\nabla_x \eta|^2 \dxdt' + \epsilon k \int_0^t\int_\Omega \int_D M |\nabla_x \sqrt{\tilde{f}}|^2 {\rm d} \vq \ \dxdt'\\
& \quad + \frac{kA_0}{4\lambda}\int_0^t\int_\Omega \int_D M \left|\nabla_{\vq}\sqrt{\tilde{f}}\right|^2 \ {\rm d}\vq \ \dxdt'\\
& \leq \int_\Omega \left[\frac{1}{2}\rho_{0}|\vu_{0}|^2 + \delta \eta_{0}^2 + k\int_D M \mathcal{F}(\tilde{f}_{0}) \ {\rm d}\vq\right]\dx.
\end{split}
\end{align}
\end{itemize}
\end{definition}
The main goal of this paper is to prove the following existence result for weak solutions to the free boundary problem ($\vc{P_F}$).
\begin{theorem}[Existence of Solutions to ($\vc{P_F}$)] \label{thm:FBexist} Fix $-\frac{4}{3} < \omega <\frac{5}{3}$. Suppose that the initial conditions \eqref{eq:FBIC} and the boundary conditions \eqref{eq:FBPBC1}-\eqref{eq:FBPBC3} are satisfied. Then, there exists a weak solution, in the sense of Definition \ref{def:FBsol}, to the problem $(\vc{P_F})$. Moreover, the energy inequality \eqref{eq:FBenergy} is satisfied.
\end{theorem}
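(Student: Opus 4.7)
The plan is to construct the weak dissipative solution to $(\vc{P_F})$ as the limit of solutions to the compressible approximating problems $(\vc{P_n})$ with pressure $p_n(\rho_n)=\rho_n^{\gamma_n}$, $\gamma_n\to\infty$, as described in Section~\ref{sec:approx}. Existence for each fixed $n$, with initial data suitably obtained from \eqref{eq:FBIC}, is provided by the Feireisl--Lu--S\"uli theory \cite{FLS}, so the whole question reduces to the weak sequential stability statement of Theorem~\ref{thm:stab}; the remainder of Section~\ref{sec:proof} is devoted to proving that stability.

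The first step is to extract uniform--in--$n$ a priori bounds from the energy inequality satisfied by each $(\vc{P_n})$. The growth conditions \eqref{eq:omegacond} yield uniform control of $\sqrt{\rho_n}\vu_n$ in $L^\infty(0,T;L^2(\Omega))$, of $\phi_S(\eta_n)^{1/2}\nabla_x\vu_n$ in $L^2((0,T)\times\Omega)$, of $\eta_n$ in $L^\infty(0,T;L^2(\Omega))\cap L^2(0,T;H^1(\Omega))$, and of the entropy $\int_D M\,\mathcal{F}(\tilde f_n)\,\dq$ together with its Fisher--type dissipation. The admissible range $-\tfrac{4}{3}<\omega<\tfrac{5}{3}$ is precisely what makes H\"older, Korn and Sobolev embeddings combine into a uniform $L^r(0,T;W^{1,r}_0(\Omega))$ bound on $\vu_n$ for some $r>1$. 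A second and pivotal estimate is a uniform $L^1$ bound on $\rho_n^{\gamma_n}$, obtained by testing the momentum equation against $\mathcal{B}\bigl(\rho_n^{\gamma_n}-\fint_\Omega \rho_n^{\gamma_n}\bigr)$ and controlling the convective and variable--viscosity terms with the bounds just listed.

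Up to a subsequence one then obtains $\rho_n\to\rho$ in $C([0,T];L^p(\Omega))$ for $1\le p<\infty$, $\rho_n^{\gamma_n}\weakstar \pi$ in $\mathcal{M}((0,T)\times\Omega)$, $\vu_n\weak\vu$ in $L^r(0,T;W^{1,r}_0(\Omega))$, strong convergence of $\eta_n$ in $L^2((0,T)\times\Omega)$ coming from the parabolic $\eta$--equation, and the usual convergences for $\tilde f_n$ already exploited in \cite{BS2016,FLS}. The constraint $0\le\rho\le 1$ a.e.\ is immediate from the uniform $L^1$ bound on $\rho_n^{\gamma_n}$, since the measure of $\{\rho_n>1+\delta\}$ decays like $\delta^{-\gamma_n}$. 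Testing against $1-\rho_n$ and using $\rho_n^{\gamma_n}(1-\rho_n)\to 0$ yields the complementarity $\pi(\rho-1)=0$; the sign $\pi\ge 0$ is inherited in the weak--$\ast$ limit; and $\Div_x\vu=0$ a.e.\ on $\{\rho=1\}$ is read off from the limiting continuity equation \eqref{eq:FBcont} combined with the renormalised continuity equation satisfied by the limit $\rho\in[0,1]$.

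The hard part, and the main novelty over \cite{DT2017,DT2018}, is identifying the limit of the nonlinear viscous stress and of the pressure term in the momentum equation when $\mu^S=\phi_S(\eta)$, $\mu^B=\phi_B(\eta)$ depend on $\eta$ and only the weak regularity $\vu_n\in L^r_tW^{1,r}_x$ is available. The plan is to extend the Lions multiplier technique: test \eqref{eq:FBmom} against $\vc{\varphi}(t,x)\mathcal{A}_j[\rho_n]$, pass to the limit, and handle the viscous contribution through commutator estimates of the form $[\mathcal{R}_{ij},\phi_S(\eta_n)]\nabla_x\vu_n$ and $[\mathcal{R}_{ij},\phi_B(\eta_n)]\Div_x\vu_n$, whose compactness is secured by the strong convergence of $\eta_n$. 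Combined with Feireisl's oscillation defect estimate, controlled through the $L^1$ pressure bound, this yields the effective--viscous--flux identity, hence strong $L^1$ convergence of $\rho_n$, and permits passage to the limit in every nonlinearity; the energy inequality \eqref{eq:FBenergy} for the limit then follows by weak lower semicontinuity.
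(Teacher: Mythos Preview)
Your overall architecture matches the paper's: reduce Theorem~\ref{thm:FBexist} to the stability Theorem~\ref{thm:stab}, then prove the latter via energy estimates, a uniform $L^1$ bound on $\rho_n^{\gamma_n}$, the Lions multiplier technique with commutators for the variable viscosities, and weak lower semicontinuity for \eqref{eq:FBenergy}. Two concrete points, however, would fail as written.

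The test function for the uniform $L^1$ pressure bound is wrong. Testing \eqref{eq:mom} against $\mathcal{B}\bigl(\rho_n^{\gamma_n}-\fint_\Omega\rho_n^{\gamma_n}\bigr)$ is circular: the convective and viscous terms on the right then involve $\nabla_x\mathcal{B}(\rho_n^{\gamma_n}-\cdots)$, i.e.\ $L^p$ norms of $\rho_n^{\gamma_n}$ for $p>1$, which are precisely what you are trying to control. The paper tests instead against $\phi(t)\,\mathcal{B}\bigl(S_\epsilon[b(\rho_n)]-\fint_\Omega S_\epsilon[b(\rho_n)]\bigr)$ with $b(\rho_n)\approx\rho_n$; the pressure term then produces $\int\rho_n^{\gamma_n+1}$, the mean--value piece contributes $V_n\int\rho_n^{\gamma_n+1}$ with $V_n\le V<1$ and is absorbed on the left, and every other term is bounded by $L^p$ norms of $\rho_n$ alone, already uniform from the energy. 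Separately, the complementarity $\pi(\rho-1)=0$ cannot be obtained by ``testing against $1-\rho_n$ and using $\rho_n^{\gamma_n}(1-\rho_n)\to0$'': the product $\rho_n^{\gamma_n}\rho_n$ does not converge to $\pi\rho$ without \emph{strong} convergence of $\rho_n$. In the paper this is exactly where the effective viscous flux identity (with the commutators $R_n^{ij}$, $Q_n^{ij}$) is deployed: one compares the equations for $s_n=\rho_n\log\rho_n$ and its weak limit $\overline{s}$, shows $\overline{s}=s$, hence $\rho_n\to\rho$ strongly, and only then identifies $\rho\pi=\overline{\rho^{\gamma+1}}=\pi$. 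Your final paragraph invokes exactly this machinery, but it must be used for the free boundary conditions \eqref{eq:FBC1}--\eqref{eq:FBC3}, not merely for passing to the limit in the momentum equation.
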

The outline of the proof of Theorem \ref{thm:FBexist} is as follows:
\begin{itemize}
\item we construct a sequence of approximating problems ($\vc{P_n}$). These approximating problems will be taken to be the compressible problem described in Section \ref{sec:model}, with adiabatic exponents $\gamma_n$ such that $\gamma_n \to \infty$, and
\item we build upon the analysis in \cite{FLS} in order to demonstrate convergence of the approximating solutions to the solution of the problem ($\vc{P_F}$). 
\end{itemize}
In the next section, we set up the approximating problems ($\vc{P_n}$). We will define weak solutions to the problems ($\vc{P_n}$) and discuss the existence of weak solutions.

\section{Approximating Problems} \label{sec:approx}
The approximating problems ($\vc{P_n}$) will be defined by the governing equations
\begin{equation}\label{eq:cont}
\partial_t \rho_n + \Div_x(\rho_n \vu_n) = 0
\end{equation}
\begin{equation}\label{eq:mom}
\partial_t(\rho_n \vu_n) + \Div_x(\rho_n \vu_n \otimes \vu_n) + \nabla_x p_n(\rho_n) - \Div_x \mathbb{S}_n = \Div_x \mathbb{T}_n ,
\end{equation}
\begin{align}\label{eq:FP}
\begin{split}
&\partial_t f_n + \Div_x(f_n\vu_n) + \sum_{i = 1}^K \Div_{\vq_i}((\nabla_x \vu_n)\vq_i \ f_n) \\
& \quad \quad= \epsilon \Delta_x f_n + \frac{1}{4\lambda}\sum_{i = 1}^K \sum_{j = 1}^K A_{ij} \Div_{\vq_i}\left(M \nabla_{\vq_j}\left(\frac{f_n}{M}\right)\right),
\end{split}
\end{align}
\begin{equation}\label{eq:diff}
\partial_t \eta_n + \Div_x (\eta_n \vu_n) = \epsilon \Delta_x \eta_n,
\end{equation}
where
\begin{equation}
p_n(\rho_n) = \rho_n^{\gamma_n},
\end{equation}
and the adiabatic exponent $\gamma_n \to \infty$ as $n \to \infty$. We define the viscous and elastic stress tensors by
\begin{equation}
\mathbb{S}_n :=\muns \left(\frac{\nabla_x \vu_n + \nabla_x^T \vu_n}{2} - \frac{1}{3}(\Div_x \vu_n) \mathbb{I}\right) + \munb (\Div_x \vu_n) \mathbb{I}
\end{equation}
\begin{equation} \label{eq:extrastress}
\mathbb{T}_n:= k \sum_{i = 1}^K \mathbb{C}_i(f_n) - (k(K + 1) \eta_n + \delta \eta_n^2) \mathbb{I}.
\end{equation}
Here,
\[
\mathbb{C}_i(f_n):= \int_D f_n U_i'\left(\frac{|\vq_i|^2}{2}\right) \vq_i \vq_i^T \dq,
\]
as in Section \ref{sec:model}, and 
\[
\mu^S_n = \phi_S(\eta_n), \ \ \mu^B_n = \phi_B(\eta_n),
\]
where $\phi_S, \phi_B$ are the same $C^1$ functions as in the previous section. 

\subsubsection{Boundary Conditions}
We impose the same boundary conditions as in the free boundary problem, specifically
\begin{equation} \label{eq:BC1}
\vu_n = 0 \text{ on }\partial \Omega,
\end{equation}
\begin{equation}\label{eq:BC2}
\begin{gathered}
\left[\frac{1}{4\lambda} \sum_{j = 1}^K A_{ij}\nabla_{\vq_j}\left(\frac{f_n}{M}\right) -( \nabla_x \vu_n)\vq_i \ f_n\right]\cdot \frac{\vq_i}{|\vq_i|} = 0,\\ \text{ on } \Omega \times \partial \overline{D}_i \times (0, T], \ \ i = 1, ..., K,
\end{gathered}
\end{equation}
and
\begin{equation}\label{eq:BC3}
\nabla_x f_n \cdot \bm{n} = 0,\ \  \nabla_x\eta_n \cdot \bm{n} = 0 \text{ on } \partial \Omega \times D \times (0, T].
\end{equation}

\subsubsection{Initial Data}
The approximating problem is also supplemented with initial data 
\[
\rho_{n,0} = \rho_n|_{t = 0}, \ \ m_{n,0} = (\rho_n \vu_n)|_{t = 0}, \ \ f_{n,0} = f_n|_{t = 0},\ \  \eta_{n,0} = \eta_n|_{t = 0},
\]
which we assume satisfy the conditions
\begin{equation} \label{eq:IC}
\begin{gathered}
\rho_{n,0} \geq 0 \text{ a.e. in }\Omega, \ \ \rho_{n,0} \in L^{\gamma_n}(\Omega),\\
m_{n,0} \in L^{\frac{2\gamma_n}{\gamma_n + 1}}(\Omega),\ \  \rho_{n,0}|\vu_{n,0}|^2 \in L^1(\Omega),\\
f_{n,0} \geq 0 \text{ a.e. in } \Omega \times D,\ \  f_{n,0}\log \frac{f_{n,0}}{M} \in L^1(\Omega \times D),\\
\eta_{n,0} = \int_D f_{n,0} \text{ a.e. in }\Omega, \ \ \eta_{n,0} \in L^2(\Omega).
\end{gathered}
\end{equation}
These initial conditions are sufficient to demonstrate existence of weak solutions to the problem ($\vc{P_n}$). We also impose further conditions on the initial data in order to guarantee convergence of the sequence of approximating solutions:
\begin{equation} \label{eq:IC2}
\begin{gathered}
\rho_{n,0} \rightharpoonup \rho_0 \text{ in } L^1(\Omega),\\
m_{n,0} \rightharpoonup m_0 \text{ in }L^2(\Omega),\\
f_{n,0} \rightharpoonup f_0 \text{ in }L^1(\Omega\times D),\\
\eta_{n,0} \rightharpoonup \eta_0 \text{ in }L^2(\Omega),\\
\rho_0 |\vu_{n,0}|^2 \text{ uniformly bounded in }L^1(\Omega),\\
f_{0,n} \log \frac{f_{0,n}}{M} \text{ uniformly bounded in }L^1(\Omega\times D),
\end{gathered}
\end{equation}
\begin{equation} \label{eq:IC3}
\int_{\Omega} \rho_{n,0}^{\gamma_n} \leq c \gamma_n
\end{equation}
for some fixed $c > 0$, independent of $n$, and
\begin{equation}\label{eq:IC4}
\frac{1}{|\Omega|}\int_{\Omega} \rho_{n,0} = V_n,
\end{equation}
with $0 < V_n \le V< 1$ and $\lim_{n \to \infty} V_n = V$. These  constraints on the initial data are necessary to ensure that the limit solution corresponds to the appropriate initial data, and to uniformly bound the initial energy of the system independently from $n$.

\subsection{Weak Solutions}
For fixed $n$ we define the following notion of a weak solution to the problem ($\vc{P_n}$).
\begin{definition}[Weak Solution to ($P_n$)] \label{def:sol} A vector $(\rho_n, \vu_n, f_n, \eta_n)$ is a solution to the problem ($\vc{P_n}$) provided
\begin{itemize}
\item [(i)] The following regularity results hold:
\begin{equation*}
\begin{gathered}
\rho_n \geq 0 \text{ a.e., }\rho_n \in C_w([0, T]; L^{\gamma_n}(\Omega)),\\
\vu_n \in L^r(0, T; W_0^{1,r}(\Omega)) \text{ for some }r > 1, \ \ \rho_n\vu_n \in C_w([0, T]; L^{\frac{2\gamma_n}{\gamma_n + 1}}(\Omega)),\\
\rho_n |\vu_n|^2 \in L^\infty(0, T; L^1(\Omega)),\\
f_n \geq 0 \text{ a.e., }f_n \in C_w([0, T]; L^1(\Omega \times D)),\\
\nabla_x f_n \in L^1((0, T)\times \Omega \times D), \ \ M \nabla_{\vq}\tilde{f}_n \in L^1((0, T)\times \Omega \times D),\\
\eta_n = \int_D f_n \ {\rm d}\vq \text{ a.e., } \eta_n \in C_w([0, T];L^2(\Omega)) \cap L^2(0, T; W^{1,2}(\Omega)),\\
\mathbb{T}_n \text{ satisfies \eqref{eq:extrastress} a.e.}, \ \ \mathbb{T}_n \in L^1((0, T) \times \Omega).
\end{gathered}
\end{equation*}
\item [(ii)] The equations \eqref{eq:cont}-\eqref{eq:diff} are satisfied in the sense of distributions.
\item [(iii)] The continuity equation \eqref{eq:cont} is satisfied in the sense of renormalized solutions, i.e. for all $b \in C^1([0, \infty))$ such that $|b(s)| + |sb'(s)| \leq c < \infty$ for all $s \in [0, \infty)$, the equality
\begin{equation} \label{eq:renorm}
\partial_t b(\rho_n) + \Div_x(b(\rho_n) \vu_n) + (b(\rho_n) - \rho_nb'(\rho_n)) \Div_x \vu_n = 0
\end{equation}
holds in the sense of distributions.
\item [(iv)] The following energy inequality holds:
\begin{align} \label{eq:energy}
\begin{split}
&\int_\Omega \left[\frac{1}{2}\rho_n|\vu_n|^2 + \frac{\rho_n^{\gamma_n}}{\gamma_n - 1} + \delta \eta_n^2 + k \int_D M \mathcal{F}(\tilde{f}_n) \ {\rm d}\vq\right](t, \cdot ) \dx\\
& \quad + \int_0^t \int_\Omega \muns\left|\frac{\nabla_x \vu_n + \nabla_x^T\vu_n}{2} - \frac{1}{3}(\Div_x \vu_n)\mathbb{I}\right|^2 + \munb|\Div_x\vu_n|^2 \dxdt' \\
& \quad + 2\epsilon\delta \int_0^t \int_\Omega |\nabla_x \eta_n|^2 \dxdt' + \epsilon k \int_0^t\int_\Omega \int_D M |\nabla_x \sqrt{\tilde{f}_n}|^2 {\rm d} \vq \ \dxdt'\\
& \quad + \frac{kA_0}{4\lambda}\int_0^t\int_\Omega \int_D M \left|\nabla_{\vq}\sqrt{\tilde{f}_n}\right|^2 \ {\rm d}\vq \ \dxdt'\\
& \leq \int_\Omega \left[\frac{1}{2}\rho_{n,0}|\vu_{n,0}|^2 + \frac{\rho_{n,0}^{\gamma_n}}{\gamma_n - 1} + \delta \eta_{n,0}^2 + k\int_D M \mathcal{F}(\tilde{f}_{n,0}) \ {\rm d}\vq\right]\dx.
\end{split}
\end{align}
\end{itemize}
\end{definition}
For fixed $n$ the existence of such weak solutions is inferred by the analysis in \cite{FLS} under certain conditions on $\omega, \gamma_n$. The result reads as follows.
\begin{theorem}[Existence of Solutions to ($\vc{P_n}$)] \label{thm:exist}
Assume that the initial data $(\rho_{n,0}, m_{n,0}, f_{n,0}, \eta_{n,0})$ satisfy the conditions \eqref{eq:IC}, and that the boundary conditions \eqref{eq:BC1} - \eqref{eq:BC2} are satisfied. If, in addition, either
\[
\gamma_n > \frac{3}{2} \text{ and } 0 \leq \omega < \frac{5}{3} \text{, or } \gamma_n > \frac{6}{4 + 3\omega} \text{ and }-\frac{4}{3} < \omega \leq 0,
\]
then there exists a weak solution $(\rho_n, \vu_n, f_n, \eta_n)$ to the problem ($\vc{P_n}$), in the sense of Definition \ref{def:sol}, corresponding to the initial data $(\rho_{n,0}, m_{n,0}, f_{n,0}, \eta_{n,0})$.
\end{theorem}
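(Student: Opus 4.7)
The plan is to follow the multi-level approximation scheme developed by Feireisl, Lu, and S\"{u}li in \cite{FLS}; indeed, Theorem \ref{thm:exist} is essentially their main result recorded here in the precise form that the remainder of the paper needs. First I would regularize the system at several levels: a Faedo--Galerkin truncation of the velocity equation on a finite-dimensional space $X_m \subset W^{1,p}_0(\Omega)$; the artificial pressure $p_\e(\rho) = \rho^{\gamma_n} + \e \rho^\beta$ with $\beta$ chosen large enough to secure higher integrability of the density; an artificial dissipation term $\e \Delta_x \rho$ added to the continuity equation so as to render it parabolic; and a cut-off of the drift term in the Fokker--Planck equation. On each regularized level, solutions are produced by a fixed-point argument: given $\vu$, I would solve successively the (now parabolic) continuity equation, the advection--diffusion equation for $\eta$, and the Fokker--Planck equation for $f$, then reinsert the resulting $\rho, \eta, f$ into the Galerkin momentum equation to obtain a new velocity, closing the loop with Schauder's theorem.

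The a priori bounds are extracted from the energy inequality \eqref{eq:energy}, which controls $\sqrt{\rho_n}\vu_n$ in $L^\infty(0,T;L^2(\Omega))$, $\rho_n^{\gamma_n}$ in $L^\infty(0,T;L^1(\Omega))$, $\eta_n$ in $L^\infty(0,T;L^2(\Omega)) \cap L^2(0,T;H^1(\Omega))$, and the Fisher-information quantities involving $\sqrt{\tilde f_n}$. The delicate point introduced by the variable viscosity is that \eqref{eq:energy} only delivers a weighted bound on $\nabla_x \vu_n$, and the weight $\muns = \phi_S(\eta_n)$ can degenerate when $\omega < 0$. To recover an unweighted Sobolev estimate $\vu_n \in L^r(0,T; W^{1,r}_0(\Omega))$ for some $r>1$, one interpolates using the $L^p$ bound on $\muns$ coming from \eqref{eq:omegacond} together with the pressure bound on $\rho_n$; the dichotomy between $\omega \geq 0$, $\gamma_n > 3/2$ and $\omega \leq 0$, $\gamma_n > 6/(4+3\omega)$ in the hypotheses is dictated by precisely this interpolation.

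The main obstacle is the passage to the limit in the momentum equation as the regularization parameters vanish, and in particular in the nonlinear pressure $p_n(\rho_n) = \rho_n^{\gamma_n}$. This requires strong convergence of $\rho_n$ in $L^1$ and is obtained by adapting Lions's effective viscous flux identity to the variable-viscosity framework. The plan is to test the momentum equation with $\nabla_x\Delta^{-1}[\rho_n^\theta]$ for a suitably small $\theta>0$, exploit the commutator properties of the Riesz operators $\mathcal{R}_{ij}, \mathcal{A}_j$ recorded in Section \ref{sec:model}, and use the strong convergence of $\eta_n$ in $L^2(0,T;L^2(\Omega))$ supplied by Aubin--Lions (from the parabolic bound on $\eta_n$ together with \eqref{eq:diff}), so that $\muns, \munb$ pass to their limits strongly. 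Combined with the renormalized continuity equation in the sense of DiPerna--Lions and the monotonicity of the pressure, this yields $\rho_n \to \rho$ almost everywhere, closing the pressure compactness.

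Finally, for the Fokker--Planck component I would apply Aubin--Lions to $\sqrt{\tilde f_n}$, whose spatial and configurational gradients are bounded in $L^2$ by \eqref{eq:energy}, with the time derivative estimated in a negative Sobolev norm directly from \eqref{eq:FP}. Strong convergence of $\sqrt{\tilde f_n}$ together with weak convergence of $\nabla_x\vu_n$ then suffices to pass to the limit in the drift term $(\nabla_x\vu_n)\vq_i f_n$ and in the Kramers expression $\mathbb{C}_i(f_n)$; the remaining details follow the argument of \cite{FLS} essentially verbatim.
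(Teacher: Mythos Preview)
Your overall strategy is sound and matches the paper's approach: the paper also treats Theorem \ref{thm:exist} as essentially imported from \cite{FLS}, giving only a brief outline (cut-off on $f$, regularized pressure and continuity equation, fixed-point argument at the approximate level, then passage to the limit via the stability machinery of \cite{FLS}).

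Two small points of divergence are worth noting. First, you assert that Theorem \ref{thm:exist} ``is essentially their main result,'' but the paper is careful to stress that \cite{FLS} proves only a \emph{weak sequential stability} result; the actual existence of approximate solutions has to be supplied separately, and the paper points to the time-discretization scheme of Barrett and S\"{u}li \cite{BS2016} (constant-viscosity case) for that purpose. Second, at the lowest approximation level you propose a Faedo--Galerkin truncation of the velocity, whereas the paper's outline follows \cite{BS2016} and discretizes in time. Both routes are standard for compressible Navier--Stokes and lead to the same endpoint; the time-discretization has the advantage that it plugs directly into the existing Barrett--S\"{u}li framework, while Faedo--Galerkin is closer in spirit to the Feireisl--Novotn\'y--Petzeltov\'a theory. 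Either is acceptable here, but you should be aware that the paper does not claim the full existence proof is already in \cite{FLS}.
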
 
It is important to point out that in \cite{FLS} only a stability result is proved. The existence can be established by combining the stability result with 
the existence proof established by Barrett and S\"{u}li (\cite{BS2016}) in the case of constant viscosity coefficients. We refer the reader to \cite{FLS}, and outline only the main steps here.
\begin{itemize}
\item Construct a sequence of approximating problems. This is done through the introduction of a cutoff function applied to the probability density function $f$, as well as through regularizing terms added to the fluid pressure and the continuity equation. The problem is then discretized in time.
\item Prove existence of weak solutions to the approximating problems through fixed point-type arguments.
\item Utilize the stability results from \cite{FLS} and the methods from \cite{DT2017} to demonstrate convergence of the approximating solutions to a weak solution to the compressible problem.
\end{itemize}

\section{Proof of Main Theorem} \label{sec:proof}
This section is devoted to the proof of the main Theorem \ref{thm:FBexist}. We first state the following stability result.

\begin{theorem}[Convergence of Approximating Solutions] \label{thm:stab} Fix $-\frac{4}{3} < \omega < \frac{5}{3}$, and let $\{\gamma_n\}_{n = 1}^\infty$ be a sequence of real numbers such that $\gamma_n \to \infty$ as $n \to \infty$, and for all $n \in \mathbb{N}$,
\begin{equation}\label{eq:gammaomega}
\gamma_n > \frac{3}{2} \text{ if }\omega \geq 0, \ \ \gamma_n > \frac{6}{4 + 3\omega} \text{ if }\omega \leq 0.
\end{equation}
Let $\{(\rho_{n,0}, \vu_{n,0}, f_{n,0}, \eta_{n,0})\}_{n = 1}^\infty$ be a sequence of initial data satisfying the initial conditions \eqref{eq:IC}  and \eqref{eq:IC2}. Then, for each $n$ there exist a global weak solution $(\rho_n, \vu_n, f_n, \eta_n)$ to the problem $(\vc{P_n})$ (in the sense of Definition \ref{def:sol}), corresponding to initial data $(\rho_{n,0}, \vu_{n,0}, f_{n,0}, \eta_{n,0})$, such that
\begin{equation*}
\lim_{n \to \infty}(\rho_n - 1)_+ = 0 \text{ in }L^\infty(0, T; L^p(\Omega)) \text{ for any }1 \leq p < \infty.
\end{equation*}
Moreover, 
\begin{equation*}
(\rho_n)^{\gamma_n} \text{ is bounded in }L^1 \text{ for }n \text{ such that } \gamma_n \geq 4,
\end{equation*}
and up to a subsequence there exists $\pi \in \mathcal{M}((0, T)\times \Omega)$ such that
\begin{equation*}
(\rho_n)^{\gamma_n} \rightharpoonup \pi \text{ as }n \to \infty.
\end{equation*}
If, in addition, we assume that $\rho_{n,0} \to \rho_0$ in $L^1(\Omega)$, then we have the following convergence (up to a subsequence):
\begin{equation} \label{eq:conv}
\begin{gathered}
\rho_n \to \rho \text{ in }C_w([0, T];L^p(\Omega)) \text{ for any }1 \leq p < \infty,\\
\rho_n \vu_n \to \rho \vu \text{ in }C_w([0, T]; L^r( \Omega)) \text{ for any } 1 \leq r < 2,\\
\rho_n \vu_n \otimes \vu_n \rightharpoonup \rho \vu \otimes \vu \text{ in } L^2(0, T; L^r(\Omega)) \text{ for some } r> 1,\\
\vu_n \rightharpoonup \vu \text{ in } L^2(0, T; W^{1,2}(\Omega)) \text{ if } \omega \geq 0,\\
\vu_n \rightharpoonup \vu \text{ in } L^{\frac{20}{10 + 3|\omega|}}(0, T; W^{1,{\frac{20}{10 + 3|\omega|}}}(\Omega)) \text{ if } \omega \leq 0,\\
f_n \to f \text{ in }L^1(0, T; L^1(\Omega \times D)),\\
\nabla_{\vq} \sqrt{\tilde{f}_n} \rightharpoonup \nabla_{\vq} \sqrt{\tilde{f}}, \ \ \nabla_x \sqrt{\tilde{f}_n} \rightharpoonup \nabla_x \sqrt{\tilde{f}} \text{ in }L^2(0, T; L_M^2(\Omega \times D)),\\
\eta_n \to \eta \text{ in }C_w(0, T; L^2(\Omega)) \text{ and weakly in }L^2(0, T; W^{1,2}(\Omega)),\\
(\mu_n^S, \mu_n^B) \to (\mu^S, \mu^B) \text{ in }L^q((0, T)\times \Omega) \text{ for any }1 \leq q < \infty \text{ when } \omega \leq 0,\\
(\mu_n^S, \mu_n^B) \to (\mu^S, \mu^B) \text{ in }L^\frac{10}{3\omega}((0, T)\times \Omega) \text{ when } \omega \geq 0,
\end{gathered}
\end{equation}
and $(\rho, \vu, \pi, f, \eta)$ is a weak solution to the problem $(\vc{P_F})$ in the sense of Definition \ref{def:FBsol}.
\end{theorem}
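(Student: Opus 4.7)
The plan is to treat Theorem \ref{thm:stab} as a limit passage in the spirit of Feireisl--Lions, where the new nontrivial ingredient is the control of the pressure as $\gamma_n\to\infty$ combined with the variable viscosity coefficients from \cite{FLS}. First I would invoke Theorem \ref{thm:exist} to obtain, for each $n$, a weak solution $(\rho_n,\vu_n,f_n,\eta_n)$ satisfying the energy inequality \eqref{eq:energy}. Feeding the initial conditions \eqref{eq:IC}, \eqref{eq:IC2}, \eqref{eq:IC3}, \eqref{eq:IC4} into the right-hand side of \eqref{eq:energy} produces a bound uniform in $n$ (the internal energy contributes at most $\tfrac{c\gamma_n}{\gamma_n-1}\le 2c$). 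This immediately yields uniform bounds on $\rho_n|\vu_n|^2$ in $L^\infty(0,T;L^1(\Omega))$, on $\eta_n$ in $L^\infty(0,T;L^2)\cap L^2(0,T;H^1)$, on $\nabla_x\sqrt{\tilde f_n}$ and $\nabla_\vq\sqrt{\tilde f_n}$ in $L^2(0,T;L^2_M(\Omega\times D))$, and on the viscous dissipation; combined with the growth conditions \eqref{eq:omegacond} and the Korn inequality (for $\omega\ge 0$ one recovers $\vu_n\in L^2(0,T;W^{1,2}_0)$, for $\omega<0$ one interpolates with the kinetic bound to obtain $\vu_n\in L^{\frac{20}{10+3|\omega|}}(0,T;W^{1,\frac{20}{10+3|\omega|}}_0)$, as in \cite{FLS}).

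Next I would upgrade $\rho_n^{\gamma_n}=p_n(\rho_n)$ to a uniform $L^1$ bound for $\gamma_n\ge 4$ by the standard Bogovski--multiplier test: plug $\varphi=\mathcal{B}(\rho_n-\overline{\rho_n})$ into the weak form of \eqref{eq:mom}. Every term on the right is controlled via the energy bounds and the $L^p$-continuity of $\mathcal{B}$; the left produces $\int_0^T\!\!\int_\Omega \rho_n^{\gamma_n}(\rho_n-\overline{\rho_n})\,\dxdt$, which, in view of $\overline{\rho_n}=V_n\le V<1$, dominates $\int \rho_n^{\gamma_n}$ up to a constant. Once this bound is secured, a Chebyshev-type argument gives \eqref{thm:stab}'s first conclusion: for every $\delta>0$,
\[
\abs{\{\rho_n>1+\delta\}}\;\le\;\frac{\|\rho_n^{\gamma_n}\|_{L^1}}{(1+\delta)^{\gamma_n}}\;\longrightarrow\;0,
\]
and then splitting $\int_\Omega (\rho_n-1)_+^p$ according to whether $\rho_n\lessgtr 1+\delta$ and using Hölder with exponent $\gamma_n/p$ shows $(\rho_n-1)_+\to 0$ in $L^\infty(0,T;L^p)$. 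Conservation of mass together with $V_n\to V<1$ then forces $0\le\rho\le 1$ a.e., and weak-$\star$ compactness in $\mathcal{M}((0,T)\times\Omega)$ produces the measure $\pi$.

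Having the uniform bounds, I would extract a subsequence and obtain the weak/weak-$\star$ limits listed in \eqref{eq:conv}. Strong convergence of $\eta_n$ in $C_w(0,T;L^2)$ follows from Aubin--Lions applied to \eqref{eq:diff} (the time derivative is controlled by the diffusion and by $\Div_x(\eta_n\vu_n)$). For $f_n$, strong convergence in $L^1(0,T;L^1(\Omega\times D))$ follows as in Barrett--Süli from the entropy bound and tightness in $\vq$, using \eqref{eq:FBPBC2}. Strong convergence of $\rho_n\vu_n$ in $C_w([0,T];L^r)$ and of $\rho_n\vu_n\otimes\vu_n$ in $L^2(0,T;L^r)$ follows from \eqref{eq:cont}, the momentum equation, the kinetic energy bound, and the standard div-curl argument. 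From the strong convergence of $\eta_n$ and the continuity of $\phi_S,\phi_B$ one then gets the convergence of $\mu_n^S,\mu_n^B$ in the stated Lebesgue spaces by Vitali's theorem together with \eqref{eq:omegacond}.

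The hard part, and where the commutator estimates advertised in the introduction enter, is the identification of the limit of $\rho_n^{\gamma_n}$ with $\pi$ compatible with the free-boundary constraints \eqref{eq:FBC1}--\eqref{eq:FBC3}, i.e.\ essentially the \emph{effective viscous flux} argument of Lions--Feireisl adapted to variable viscosity. I would test the momentum equation with $\varphi(t,x)\mathcal{A}[\mathbf{1}_\Omega T_k(\rho_n)]$ where $T_k$ is a cut-off, and exploit the Riesz commutator properties of $\mathcal R_{ij}$ and $\mathcal A_j$ to derive an identity comparing $\rho_n^{\gamma_n} T_k(\rho_n)$ with $(\tfrac{2}{3}\mu_n^S+\mu_n^B)(\Div_x\vu_n)T_k(\rho_n)$ in the limit. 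Because $\mu_n^S,\mu_n^B$ are only $\eta_n$-dependent and only Lebesgue-regular (not bounded in $W^{1,p}$), the usual Coifman--Meyer commutator is unavailable; instead I would move one derivative onto the Riesz multiplier, combine the strong $L^q$ convergence of $\mu_n^{S,B}$ with the weak convergence of $\nabla_x\vu_n$, and absorb the remainder via the enhanced pressure integrability obtained from a refined Bogovski test (as in \cite{FLS}). This yields $\Div_x\vu=0$ a.e.\ on $\{\rho=1\}$; passing to the limit in the renormalized continuity equation \eqref{eq:renorm} along the standard DiPerna--Lions lines then gives $\rho_n\to\rho$ strongly in $L^p$, the compatibility $\pi(\rho-1)=0$, and the sign of $\pi$. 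Plugging these into the weak forms of \eqref{eq:cont}--\eqref{eq:FBdiff} completes the identification of $(\rho,\vu,\pi,f,\eta)$ as a dissipative weak solution to $(\vc{P_F})$ in the sense of Definition \ref{def:FBsol}, with \eqref{eq:FBenergy} inherited from \eqref{eq:energy} by weak lower semicontinuity.
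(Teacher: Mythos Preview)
Your outline is essentially the paper's strategy: energy bounds from \eqref{eq:energy}, the Bogovski multiplier test $\varphi=\mathcal{B}(\rho_n-\overline{\rho_n})$ for the uniform $L^1$ bound on $\rho_n^{\gamma_n}$, the $(\rho_n-1)_+\to 0$ step, the FLS-type convergences for $\eta_n,f_n,\mu_n^{S,B},\mathbb{S}_n,\mathbb{T}_n$, and finally the Lions--Masmoudi multipliers argument with Riesz commutators. Two places where your account diverges from the paper are worth flagging.

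First, a harmless variant: your Chebyshev/splitting proof of $(\rho_n-1)_+\to 0$ uses the uniform $L^1$ bound on $\rho_n^{\gamma_n}$, whereas the paper uses only the cruder energy bound $\int_\Omega\rho_n^{\gamma_n}\le c\gamma_n$ together with the elementary inequality $(1+x)^{\gamma_n}\ge 1+c_p\gamma_n^p x^p$, giving $\|(\rho_n-1)_+\|_{L^p}^p\le C\gamma_n^{1-p}\to 0$ directly. Also, since $\rho_n$ is already uniformly bounded in every $L^p$, the paper never introduces the Feireisl cut-offs $T_k(\rho_n)$ but works instead with $s_n=\rho_n\log\rho_n$, which lightens the commutator bookkeeping.

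Second, and this is an actual logical misordering: the effective-viscous-flux/commutator argument does \emph{not} yield $\Div_x\vu=0$ on $\{\rho=1\}$. What it yields is the identity $\overline{s}=s$ (hence strong convergence $\rho_n\to\rho$ in $L^p$) and $\rho\pi=\overline{\rho^{\gamma+1}}$. In the paper the divergence-free condition is obtained by a completely separate, much simpler argument: once $0\le\rho\le 1$ is known and the limiting continuity equation holds, Lemma~2.1 of \cite{LionsMasmoudi-1999} (quoted in the paper) gives $\Div_x\vu=0$ a.e.\ on $\{\rho\ge 1\}=\{\rho=1\}$. The compatibility $\rho\pi=\pi$ then comes from $\rho\pi=\overline{\rho^{\gamma+1}}\ge\pi-\epsilon$ (since $\rho_n^{\gamma_n+1}\ge\rho_n^{\gamma_n}-\epsilon$ for large $n$) combined with $\rho\le 1$ and a mollification to make sense of the product $\rho\pi$. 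So the correct flow is: flux identity $\Rightarrow$ strong convergence of $\rho_n$ $\Rightarrow$ $\rho\pi=\pi$; independently, $0\le\rho\le 1$ $\Rightarrow$ $\Div_x\vu=0$ on $\{\rho=1\}$. Your last paragraph conflates these two chains.
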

As we will see in the next section, the proof of Theorem \ref{thm:FBexist} is a consequence of Theorem \ref{thm:stab}. In Section \ref{sec:stabproof}, we will prove Theorem \ref{thm:stab}.

\subsection{Proof of Theorem \ref{thm:FBexist}}\label{sec:existproof}
For any initial data $(\rho_0, m_0, f_0, \eta_0)$ satisfying \eqref{eq:FBIC}, we can construct a sequence of initial data $\{(\rho_{n,0}, m_{n,0}, f_{n,0}, \eta_{n,0})\}_{n \in \N}$, and an accompanying sequence of adiabatic constants $\{\gamma_n\}_{n \in \N}$, satisfying the hypotheses of Theorem \ref{thm:stab}, and thus obtain the convergence results in \eqref{eq:conv}. In particular, for $n \geq 2$ we can take
\[
\gamma_n = n, \ \ \rho_{n,0} = \rho_0, \ \ m_{n,0} = m_0, \ \ f_{n,0} = f_0, \ \ \eta_{n,0} = \eta_0.
\]
In order to prove Theorem \ref{thm:FBexist}, it remains to show that the energy inequality \eqref{eq:FBenergy} is satisfied. We set
\[
E_{n,0} := \int_\Omega \left[\frac{1}{2}\rho_{n,0}|\vu_{n,0}|^2 + \frac{\rho_{n,0}^{\gamma_n}}{\gamma_n - 1} + \delta \eta_{n,0}^2 + k\int_D M \mathcal{F}(\tilde{f}_{n,0}) \ {\rm d}\vq\right]\dx
\]
and note that
\[
\int_\Omega \frac{\rho_{n,0}^{\gamma_n}}{\gamma_n - 1} = \int_\Omega \frac{\rho_0^n}{n - 1} \leq |\Omega|
\]
since $0 \leq \rho_0 \leq 1$, so it follows that $E_{n,0}\leq E(0) + |\Omega|$. Therefore,
\[
\muns\left|\frac{\nabla_x \vu_n + \nabla_x^T\vu_n}{2} - \frac{1}{3}(\Div_x \vu_n)\mathbb{I}\right|^2,  \ \ \munb|\Div_x\vu_n|^2 \text{ uniformly bounded in } L^1(0, T; L^1(\Omega))
\]
due to the energy inequality. Defining
\[
g(\vu_n, \mu_n^S) = \sqrt{\muns} \left(\frac{\nabla_x \vu_n + \nabla_x^T\vu_n}{2} - \frac{1}{3}(\Div_x \vu_n)\mathbb{I}\right)
\]
and
\[
h(\vu_n, \mu_n^B) = \sqrt{\munb} (\Div_x\vu_n),
\]
it follows that
\[
g(\vu_n, \mu_n^S) \rightharpoonup \overline{g(\vu_n, \mu_n^S)} \text{ weakly in }L^2(0, T; L^2(\Omega))
\]
and
\[
h(\vu_n, \mu_n^B) \rightharpoonup \overline{h(\vu_n, \mu_n^B)} \text{ weakly in }L^2(0, T; L^2(\Omega)).
\]
Due to the strong convergence of $\muns, \munb$ we have
\[
\overline{g(\vu_n, \mu_n^S)} = g(\vu, \mu^S) \text{ and }\overline{h(\vu_n, \mu^B_n)} = h(\vu, \mu^B),
\]
so from Tonelli's weak lower semicontinuity theorem it follows that
\[
\int_0^t \int_\Omega h^2(\vu, \mu^S) + g^2(\vu, \mu^B) \leq \liminf_{n \to \infty} \int_\Omega h^2(\vu_n, \mu_n^S) + g^2(\vu_n, \mu^B_n).
\]
Next, we use the strong convergence of $f_n$, and thus of $\tilde{f}_n$, along with Fatou's Lemma and the fact that $\mathcal{F}$ is nonnegative to deduce that
\[
\|\mathcal{F}(\tilde{f}_n)\|_{L^\infty(0,  T; L_M^1(\Omega \times D))} \leq \liminf_{n \to \infty} \|\mathcal{F}(\tilde{f})\|_{L^\infty(0, T; L_M^1(\Omega \times D))},
\]  
up to a subsequence. Combining these results with the convergence results in \eqref{eq:conv} and further applications of Tonelli's theorem for weak lower semicontinuity, and the choice of initial data yield
\begin{align*}
&\int_\Omega \left[\frac{1}{2}\rho|\vu|^2  + \delta \eta^2 + k \int_D M \mathcal{F}(\tilde{f}) \ {\rm d}\vq\right](t, \cdot ) \dx\\
& \quad + \int_0^t\int_\Omega \left(\mus\left|\frac{\nabla_x \vu + \nabla_x^T\vu}{2} - \frac{1}{3}(\Div_x \vu)\mathbb{I}\right|^2 + \mub|\Div_x\vu|^2 \right)(t', \cdot)\dxdt' \\
& \quad + 2\epsilon\delta \int_0^T\int_\Omega \left(|\nabla_x \eta|^2 + \epsilon k  \int_D M |\nabla_x \sqrt{\tilde{f}}|^2 {\rm d} \vq \right)(t', \cdot) \dxdt'\\
& \quad + \frac{kA_0}{4\lambda}\int_0^t\int_\Omega \int_D M \left|\nabla_{\vq}\sqrt{\tilde{f}}\right|^2 \ {\rm d}\vq \ \dxdt'\\
& \leq\int_\Omega \left[\frac{1}{2}\rho_0|\vu_0|^2  + \delta \eta_0^2 + k \int_D M \mathcal{F}(\tilde{f}_0) \ {\rm d}\vq\right](t, \cdot ) \dx +  \liminf_{n \to \infty}\int_\Omega \frac{\rho_{n,0}^{\gamma_n}}{\gamma_n - 1} \dx.
\end{align*}
Then, we note that
\[
\liminf_{n \to \infty}\int_\Omega \frac{\rho_{n,0}^{\gamma_n}}{\gamma_n - 1} \dx = \liminf_{n \to \infty}\int_\Omega \frac{\rho_0^n}{n - 1} \dx = 0,
\]
since $0 \leq \rho_0 \leq 1$. Therefore, $(\rho, \vu, \pi, f, \eta)$ satisfies the energy inequality \eqref{eq:FBenergy} and is a weak solution to the problem ($\vc{P_F}$) in the sense of Definition \ref{def:FBsol}. This concludes the proof of Theorem \ref{thm:FBexist}.

\subsection{Proof of Theorem \ref{thm:stab}}\label{sec:stabproof}

We now set out to prove Theorem \ref{thm:stab}. In Section \ref{sec:unifbounds} we will determine a priori bounds on the quantities of interest by using the assumptions on the initial data, along with the energy inequality \ref{eq:energy}, and in Section \ref{sec:L1} we prove a uniform $L^1$ bound on the quantity $\rho_n^{\gamma_n}$. These uniform bounds will lead to convergence results established in Section \ref{sec:conv}. In Section \ref{sec:conv} we will also prove that the limiting solution is in fact a solution to the problem ($\vc{P_F}$), in the sense of Definition \ref{def:FBsol}, by verifying that the constraint $0 \leq \rho \leq 1$ and the free boundary condition \eqref{eq:FBC4} are satisfied a.e. in $(0, T)\times \Omega$, the divergence-free condition \eqref{eq:FBC1} is satisfied a.e. in $\{\rho = 1\}$. Throughout the rest of the paper, all convergence results are up to a subsequence.

\subsubsection{A Priori Estimates}\label{sec:unifbounds}
The uniform boundedness and weak convergence assumptions for the initial data imply that 
\begin{equation*}
E_{n,0}  \leq C
\end{equation*}
uniformly in $n$. Following the procedure outlined in \cite{FLS}, the following bounds are uniform in $n$:
\begin{equation} \label{eq:unifbounds}
\begin{gathered}
\rho_n |\vu_n|^2 \in L^\infty(0, T; L^1(\Omega)), \ \ \rho_n \in L^\infty(0, T; L^1(\Omega)) \\
\rho_n \vu_n \in L^\infty(0, T; L^{\frac{2\gamma_n}{\gamma_n + 1}}(\Omega)),\\
\eta_n \in 
L^2(0, T; \dot{H}^1(\Omega)), \\
\vu_n \in L^2(0, T; W_0^{1,2}(\Omega)) \text{ when }\omega \geq 0,\\
\vu_n \in 
L^{\frac{4}{2 + |\omega|}}(0, T; W_0^{1, \frac{12}{6 + |\omega|}}(\Omega)) \text{ when }\omega \leq 0\\
\mathbb{S}_n \in L^{\frac{20}{10 + 3\omega}}((0, T)\times \Omega) \text{ when }\omega \geq 0,\\
\mathbb{S}_n \in L^2((0, T)\times \Omega) \text{ when }\omega \leq 0,\\
\mathbb{T}_n \in L^2(0, T; L^{\frac{4}{3}}(\Omega)),\\
\mathcal{F}(\tilde{f}_n) \in L^\infty(0, T; L_M^1(\Omega \times D)) .
\end{gathered}
\end{equation}
Unfortunately, from the energy inequality we have only the bound
\[
\int_\Omega \rho_n^{\gamma_n} \dx \leq c(\gamma_n - 1),
\]
which is not uniform in $n$. Therefore, the next issue is to prove a uniform bound in $n$ for $\rho_n$.
It follows from the energy inequality and the initial condition \eqref{eq:IC3} that
\[
\int_\Omega \rho_n^{\gamma_n} \dx \leq (\gamma_n - 1)E_{n,0} + \int_\Omega (\rho_{n,0})^{\gamma_n}\dx \leq (\gamma_n - 1)E_{n,0} + c\gamma_n \le c \gamma_n,
\]
where the constant $c$ is independent of $n$. Fix $p \in (1, \infty)$. For sufficiently large $n$ we have $\gamma_n > p$, and from the Holder inequality
\[
\|\rho_n\|_{L^\infty(0, T; L^p(\Omega))} \leq \|\rho_n\|_{L^\infty(0, T; L^1(\Omega))}^{\theta_n} \|\rho_n\|_{L^\infty(0, T; L^{\gamma_n}(\Omega))}^{1 - \theta_n} \lesssim V_n^{\theta_n}(c\gamma_n)^{\frac{1 - \theta_n}{\gamma_n}},
\]
where $\frac{1}{p} = \theta_n + \frac{1 - \theta_n}{\gamma_n}$. Recalling that $\gamma_n \to \infty, \ V_n \to V$ as $n \to \infty$, it follows that $\theta_n \to \frac{1}{p}$ as $n \to \infty$, and
\[
\lim_{n \to \infty} V_n^{\theta_n} = V^{1/p}, \ \ \lim_{n \to \infty}(c\gamma_n)^{\frac{1 - \theta_n}{\gamma_n}} = 1.
\]
Therefore,
\[
\lim_{n \to \infty} V_n^{\theta_n}(c\gamma_n)^{\frac{1 - \theta_n}{\gamma_n}} = V^{1/p},
\]
and for sufficiently large $n$,
\[
\|\rho_n\|_{L^\infty(0, T; L^p(\Omega))} \lesssim V^{1/p}
\]
independently of $n$. Thus, $\rho_n$ is uniformly bounded in $L^\infty(0, T; L^p(\Omega))$ for all $1 \leq p < \infty$. Moreover, we have
\begin{equation}\label{eq:linfty}
\sup_n\|\rho_n\|_{L^\infty(0, T; L^{\gamma_n}(\Omega))} \leq \sup_n (c\gamma_n)^{\frac{1}{\gamma_n}} \lesssim 1,
\end{equation}
by the same argument as above.

\subsubsection{$L^1$ regularity for $\rho_n^{\gamma_n}$}\label{sec:L1}
The previous estimates \eqref{eq:linfty} give us a uniform bound on $\rho_n$ in $L^\infty(0, T; L^{\gamma_n}(\Omega))$. However, in order to demonstrate that the sequence $\{\rho_n^{\gamma_n}\}$ will converge in the space of measures $\mathcal{M}((0, T)\times \Omega)$, we need to prove a uniform bound in $L^1((0, T)\times \Omega)$ on the quantity $\rho_n^{\gamma_n}$. 

First, we assume that $\rho_n^{\gamma_n + 1}$ is uniformly bounded in $L^1((0, T)\times \Omega)$. Then, 
\begin{align*}
\int_0^T \int_\Omega \rho_n^{\gamma_n} \dx&= \int_0^T \left(\int_{\Omega \cap \{\rho_n \leq 1\}} \rho_n^{\gamma_n} \ \dx + \int_{\Omega \cap \{\rho_n > 1\}} \rho_n^{\gamma_n} \ \dx\right)\\
& \leq \int_0^T \int_\Omega (\rho_n + \rho_n^{\gamma_n + 1}) \dx.
\end{align*}
Since $\rho_n \in L^\infty(0, T; L^1(\Omega))$ from \eqref{eq:unifbounds}, it follows that 
\[
\rho_n^{\gamma_n} \text{ uniformly bounded in } L^1((0, T)\times \Omega).
\]

We now prove that  $\rho_n^{\gamma_n + 1}$ is uniformly bounded in $L^1((0, T)\times \Omega)$, following the method introduced by Fereisl \cite{Feireisl2001}. We define the test function
\[
\varphi_n(t, x):= \phi(t) \mathcal{B} \left(S_\epsilon [b(\rho_n)] - \frac{1}{|\Omega|} \int_\Omega S_\epsilon [b(\rho_n)]\right),
\]
where $\phi \in C_c^\infty([0, T])$ is a nonnegative test function, $S_\epsilon$ is the classical mollifier in the spatial variable, and $b \in C^1([0, \infty))$ is a function such that $sb'(s) \approx b(s)$, and $|sb'(s)| + |b(s)| \leq c < \infty$ for all $s \in [0, \infty)$. Then we have
\begin{equation}\label{eq:2}
\partial_t S_\epsilon[b(\rho_n)] + \Div_x(S_\epsilon[b(\rho_n)]\vu_n) +S_\epsilon \left[(b'(\rho_n)\rho_n - b(\rho_n)) \Div_x \vu_n\right] = r_{n,\epsilon},
\end{equation}
where $\lim_{\epsilon \to 0} r_{n,\epsilon} = 0$, as shown in Lemma 2.1 in \cite{Feireisl2001}.

Taking $\varphi_n$ as a test function in the $n$th momentum equation \eqref{eq:mom}, and utilizing \eqref{eq:2}, yields
\begin{align*}
&\int_0^T \int_\Omega \phi \rho_n^{\gamma_n} S_\epsilon [b(\rho_n)] \ \dxdt \\
&\quad= \int_0^T \int_\Omega \phi \rho_n^{\gamma_n} \left(\frac{1}{|\Omega|} \int_\Omega S_\epsilon[b(\rho_n)] \ {\rm d}y\right) \ \dxdt\\
&\quad - \int_0^T \int_\Omega \partial_t \phi \rho_n \vu_n \cdot \mathcal{B} \left(S_\epsilon[b(\rho_n)] - \frac{1}{|\Omega|}\int_\Omega S_\epsilon[b(\rho_n)] \ {\rm d}y\right)\dxdt\\
&\quad+ \int_0^T \int_\Omega \phi \rho_n \vu_n \cdot \mathcal{B}\bigg(S_\epsilon [(b'(\rho_n)\rho_n - b(\rho_n))\Div_x \vu_n]\\
&\quad \qquad - \left.\frac{1}{|\Omega|}\int_\Omega S_\epsilon[(b'(\rho_n)\rho_n - b(\rho_n))\Div_x \vu_n] {\rm d}y\right)\dxdt\\
&\quad- \int_0^T \int_\Omega \phi \rho_n \vu_n \cdot \mathcal{B}\left(r_{n,\epsilon}  - \frac{1}{|\Omega|}\int_\Omega r_{n,\epsilon} {\rm d}y\right) \ \dxdt\\
&\quad + \int_0^T \int_\Omega \phi \rho_n \vu_n \cdot \mathcal{B} \left(\Div_x(S_\epsilon[b(\rho_n)]\vu_n)\right) \dxdt\\
&\quad - \int_0^T \int_\Omega \phi \rho_n \vu_{n,i}\vu_{n,j}\partial_{x_i} \mathcal{B}_j \left(S_\epsilon[b(\rho_n)] - \frac{1}{|\Omega|}\int_\Omega S_\epsilon[b(\rho_n)] \ {\rm d}y\right)\dxdt\\
&\quad + \int_0^T \int_\Omega \phi \mathbb{S}_n:\nabla_x \mathcal{B} \left(S_\epsilon[b(\rho_n)] - \frac{1}{|\Omega|}\int_\Omega S_\epsilon[b(\rho_n)] \ {\rm d}y\right)\dxdt\\
&\quad - \int_0^T \int_\Omega \phi\eta_n^2 \left(S_\epsilon[b(\rho_n)] - \frac{1}{|\Omega|}\int_\Omega S_\epsilon[b(\rho_n)] \ {\rm d}y\right)\dxdt\\
&\quad + \int_0^T \int_\Omega \phi \mathbb{T}_{i,j}(f_n)\partial_{x_i}\mathcal{B}_j\left(S_\epsilon[b(\rho_n)] - \frac{1}{|\Omega|}\int_\Omega S_\epsilon[b(\rho_n)] \ {\rm d}y\right)\dxdt\\
&\quad= \sum_{i = 1}^{9} I_i.
\end{align*}
We then use the properties of the operator $\mathcal{B}$ along with the previously proven a priori estimates to bound each term individually. Since the regularity of the fluid velocity $\vu$ is dependent on the value of $\omega$, we consider two cases: the case when $\omega \geq 0$ and the case when $\omega \leq 0$.
\newline
\newline
\textit{Case: $0 \leq \omega < \frac{5}{3}$.} For $I_1$ we have 
\begin{align*}
I_1 & \leq \int_0^T \int_\Omega \phi (\rho_n^{\gamma_n + 1} + \rho_n) \left(\frac{1}{|\Omega|} \int_\Omega S_\epsilon[b(\rho_n)] \ {\rm d}y\right) \ \dxdt\\
& \leq \int_0^T \int_\Omega \phi \rho_n^{\gamma_n + 1} \left(\frac{1}{|\Omega|} \int_\Omega S_\epsilon[b(\rho_n)] \ {\rm d}y\right) \ \dxdt \\
& \quad \quad+C(T, \Omega)\|\rho_n\|_{L^\infty(0, T; L^{\gamma_n}(\Omega))}\|b(\rho_n)\|_{L^\infty(0, T; L^{\gamma_n}(\Omega))} .
\end{align*}
For $I_2$, we have
\begin{align*}
I_2  
& \lesssim  \|\rho_n\vu_n\|_{L^\infty(0, T; L^{\frac{2\gamma_n}{\gamma_n + 1}}(\Omega))}\|b(\rho_n)\|_{L^\infty(0, T; L^{\frac{6\gamma_n}{5\gamma_n - 3}}(\Omega))}
\end{align*}
For $I_3$ we have
\begin{align*}
I_3 
& \lesssim  \|\rho_n\|_{L^\infty(0, T; L^{\gamma_n}(\Omega))} \|\nabla_x \vu_n\|_{L^2(0, T; L^2(\Omega))}^2\|b(\rho_n)\|_{L^\infty(0, T; L^{\frac{3\gamma_n}{2\gamma_n - 3}}(\Omega))}.
\end{align*}
Next, for $I_4$ we have 
\begin{align*}
I_4 & \lesssim \|\rho_n \vu_n\|_{L^\infty(0, T; L^{\frac{2\gamma_n}{\gamma_n + 1}}(\Omega))}\|r_{n,\epsilon}\|_{L^\infty(0, T; L^{\frac{6\gamma_n}{5\gamma_n - 3}}(\Omega))}.
\end{align*}
For $I_5 + I_6$ we have
\begin{align*}
I_5 + I_6 
& \lesssim \|\rho_n\|_{L^\infty(0, T; L^{\gamma_n}(\Omega))}\| \vu_n\|_{L^2(0, T; L^{6}(\Omega))}^2 \|b(\rho_n)\|_{L^\infty(0, T; L^{\frac{3\gamma_n}{2\gamma_n - 3}}(\Omega))}.
\end{align*}

For $I_7 $ we have
\begin{align*}
I_7 & \lesssim \|\mathbb{S}_n\|_{L^{\frac{20}{10 +3\omega}}((0, T)\times \Omega)}\|b(\rho_n)\|_{L^\infty(0, T; L^{\frac{20}{10 - 3\omega}}(\Omega))},
\end{align*} 
which is satisfied when $\omega < \frac{10}{3}$. For $I_8$ we have
\begin{align*}
I_8 & \lesssim \|\eta_n\|_{L^2(0, T; L^6(\Omega))}^2 \|b(\rho_n)\|_{L^\infty(0, T; L^{\frac{3}{2}}(\Omega))},
\end{align*}
and for $I_9$ we have
\begin{align*}
I_9 \lesssim \|\mathbb{T}_n\|_{L^2(0, T; L^{\frac{4}{3}}(\Omega))} \|b(\rho_n)\|_{L^\infty(0, T; L^4(\Omega))}.
\end{align*}
Thus, due to the uniform bounds \eqref{eq:unifbounds} we have
\begin{align*}
\int_0^T \int_\Omega \phi \rho_n^{\gamma_n} S_\epsilon[b(\rho_n)] \ \dxdt & \lesssim_T \int_0^T\int_\Omega \phi \rho_n^{\gamma_n + 1} \left(\frac{1}{|\Omega|} \int_\Omega S_\epsilon[b(\rho_n)] {\rm d}y\right)\dxdt \\
&  + \|b(\rho_n)\|_{L^\infty(0, T; L^{\gamma_n}(\Omega))}^2 + \|b(\rho_n)\|_{L^\infty(0, T; L^{\frac{6\gamma_n}{5\gamma_n - 3}}(\Omega))}\\
&  + \|b(\rho_n)\|_{L^\infty(0, T; L^{\frac{3\gamma_n}{2\gamma_n - 3}}(\Omega))}+ \|r_{n,\epsilon}\|_{L^{\frac{6\gamma_n}{5\gamma_n - 3}}((0, T)\times\Omega)}\\
& +  \|b(\rho_n)\|_{L^\infty(0, T; L^{\frac{20}{10 - 3\omega}}(\Omega))} + \|b(\rho_n)\|_{L^\infty(0, T;L^{\frac{3}{2}}(\Omega))} \\
& + \|b(\rho_n)\|_{L^\infty(0, T; L^4(\Omega))}.
\end{align*}
We then take $\epsilon \to 0$, letting $b(\rho_n)$ approximate $\rho_n$ and $\phi$ approximate 1. This yields
\begin{align*}
\int_0^T \int_\Omega \rho_n^{\gamma_n + 1} \dxdt & \lesssim \int_0^T\int_\Omega \rho_n^{\gamma_n + 1} \left(\frac{1}{|\Omega|} \int_\Omega \rho_n{\rm d}y\right)\dxdt \\
& + \|\rho_n\|_{L^\infty(0, T; L^{\gamma_n}(\Omega))}^2 + \|\rho_n\|_{L^\infty(0, T; L^{\frac{6\gamma_n}{5\gamma_n - 3}}(\Omega))} + \|\rho_n\|_{L^\infty(0, T; L^{\frac{3\gamma_n}{2\gamma_n - 3}}(\Omega))}\\
& + \|\rho_n\|_{L^\infty(0, T; L^{\frac{20}{10 - 3\omega}}(\Omega))} + \|\rho_n\|_{L^\infty(0, T;L^{\frac{3}{2}}(\Omega))} + \|\rho_n\|_{L^\infty(0, T; L^4(\Omega))}.
\end{align*}
Noting that
\[
\frac{1}{|\Omega|} \int_\Omega \rho_n{\rm d}y  \leq V_n< V < 1,
\]
and $\rho_n \in L^\infty(0, T; L^{\gamma_n}(\Omega))$ uniformly in $n$, it follows that
\begin{align*}
\int_0^T \int_\Omega  \rho_n^{\gamma_n + 1} \dxdt & \lesssim 1 +  \|\rho_n\|_{L^\infty(0, T; L^{\frac{6\gamma_n}{5\gamma_n - 3}}(\Omega))} + \|\rho_n\|_{L^\infty(0, T; L^{\frac{3\gamma_n}{2\gamma_n - 3}}(\Omega))}\\
&+ \|\rho_n\|_{L^\infty(0, T; L^{\frac{20}{10 - 3\omega}}(\Omega))} + \|\rho_n\|_{L^\infty(0, T;L^{\frac{3}{2}}(\Omega))} + \|\rho_n\|_{L^\infty(0, T; L^4(\Omega))}.
\end{align*}
Recalling the constraint $0 \leq \omega < \frac{5}{3}$, from \eqref{eq:gammaomega}, it follows that $\rho_n^{\gamma_n + 1}$ is uniformly bounded in $L^1((0, T)\times \Omega)$, and thus 
\[
\rho_n^{\gamma_n} \text{ uniformly bounded in } L^1((0, T)\times \Omega)\text{, provided } \gamma_n \geq 4.
\] 
%
\newline
\newline
\textit{Case: $-\frac{4}{3} < \omega \leq 0$.}  Due to the decreased regularity of $\vu_n$ we must treat the terms $I_3, I_5, I_6, I_7$ differently from the previous case. For $I_3$ we have
\begin{align*}
I_3 
& \lesssim  \|\rho_n\vu_n\|_{L^\infty(0, T; L^{\frac{2\gamma_n}{\gamma_n + 1}}(\Omega))}\|\nabla_x \vu_n\|_{L^{\frac{4}{2 - \omega}}(0, T; L^{\frac{12}{6 -\omega}}(\Omega))}\|b(\rho_n)\|_{L^\infty(0, T; L^{\frac{12\gamma_n}{(4 + \omega)\gamma_n - 6}}(\Omega))},
\end{align*}
and for $I_5, I_6$ we have
\begin{align*}
I_5  + I_6
& \lesssim \|\rho_n \vu_n\|_{L^\infty(0, T; L^{\frac{2\gamma_n}{\gamma_n + 1}}(\Omega))}\|\nabla_x \vu_n\|_{L^{\frac{4}{2 - \omega}}(0, T; L^{\frac{12}{6 - \omega}}(\Omega))} \|b(\rho_n)\|_{L^\infty(0, T; L^{\frac{12\gamma_n}{(4 + \omega)\gamma_n - 6}}(\Omega))}.
\end{align*}
These inequalities are valid under the conditions \eqref{eq:gammaomega} on $\omega$ provided
\[
\gamma_n > \frac{6}{4 + \omega}.
\]
Since $\omega > -\frac{4}{3}$, this condition is satisfied if $\gamma_n > \frac{9}{4}$. Finally, for $I_7$ we have
\begin{align*}
I_7  & \lesssim \|\mathbb{S}_n\|_{L^2((0, T)\times \Omega)}\|b(\rho_n)\|_{L^\infty(0, T; L^2(\Omega))}.
\end{align*} 
Thus, in the case that $-\frac{4}{3} < \omega \leq 0$, we obtain
\begin{align*}
\int_0^T \int_\Omega  \rho_n^{\gamma_n + 1} \dxdt & \lesssim 1 +  \|\rho_n\|_{L^\infty(0, T; L^{\frac{6\gamma_n}{5\gamma_n - 3}}(\Omega))} + \|\rho_n\|_{L^\infty(0, T; L^{\frac{12\gamma_n}{(4 + \omega)\gamma_n - 6}}(\Omega))}\\
& + \|\rho_n\|_{L^\infty(0, T; L^{2}(\Omega))} + \|\rho_n\|_{L^\infty(0, T;L^{\frac{3}{2}}(\Omega))} + \|\rho_n\|_{L^\infty(0, T; L^4(\Omega))},
\end{align*}
which implies that $\rho_n^{\gamma_n + 1}$ when $\gamma_n \geq 4$, and thus 
\[
\rho_n^{\gamma_n} \text{ is uniformly bounded in } L^1((0, T)\times \Omega) \text{, provided }\gamma_n \geq 4.
\]

\subsubsection{Convergence of Approximating Solutions} \label{sec:conv}
The $L^1$ uniform bound on $\rho_n^{\gamma_n}$ obtained in the previous section implies that
\begin{equation*}
\rho_n^{\gamma_n} \rightharpoonup \pi \text{ in }\mathcal{M}((0, T)\times \Omega).
\end{equation*}
Following the procedure outlined in \cite{FLS}, we obtain the following convergence results:
\begin{equation*}
\begin{gathered}
\vu_n \rightharpoonup \vu \text{ in }L^2(0, T; W_0^{1,2}(\Omega)) \text{ when }\omega \geq 0,\\
 \vu_n \rightharpoonup \vu \text{ in }L^2(0, T; W_0^{1,\frac{4}{2 + |\omega|}}(\Omega)) \cap L^{\frac{4}{2 + |\omega|}}(0, T; W_0^{1, \frac{12}{6 + |\omega|}}(\Omega)) \text{ when }\omega \leq 0,\\
\eta_n \to \eta \text{ in }C_w([0, T]; L^2(\Omega)), \\
\eta_n \rightharpoonup \eta \text{ in }L^2(0, T; W^{1,2}(\Omega)),\\
f_n \to f \text{ in }L^1(0, T; L^1(\Omega \times D)),\ \ f \in C_w([0, T]; L^1(\Omega \times D)),\\
\nabla_{\vq}\sqrt{\tilde{f}_n} \rightharpoonup \nabla_{\vq}\sqrt{\tilde{f}}, \ \ \nabla_x \sqrt{\tilde{f}_n} \rightharpoonup\nabla_x \sqrt{\tilde{f}} \text{ in }L^2(0, T; L_M^2(\Omega \times D)),\\
(\mu^S_n, \mu^B_n) \to (\mu^S, \mu^B) \text{ in }L^{\frac{10}{3\omega}}((0, T)\times \Omega) \text{ when }\omega \geq 0,\\
(\mu^S_n, \mu^B_n) \to (\mu^S, \mu^B) \text{ in }L^q((0, T) \times \Omega) \text{ for any }q < \infty \text{ when }\omega \leq 0,\\
\mathbb{S}_n \rightharpoonup \mathbb{S} \text{ in }L^{\frac{10}{3\omega + 5}}((0, T)\times \Omega) \text{ when }\omega \geq 0,\\
\mathbb{S}_n \rightharpoonup \mathbb{S} \text{ in }L^r((0, T)\times \Omega) \text{ for any } r < \frac{20}{3\omega + 10} \text{ when }\omega \geq 0,\\
\mathbb{T}_n \to \mathbb{T} \text{ in }L^r((0, T) \times \Omega) \text{ for any }r < \frac{20}{13}.
\end{gathered}
\end{equation*}
It can also be shown, as in \cite{FLS}, that the nonlinear terms in the Fokker-Planck equation \eqref{eq:FP} converge in the sense of distributions. 

For terms involving the fluid density $\rho_n$, we follow the same procedure as in \cite{LionsMasmoudi-1999}, using the convergence estimates presented in \cite{FLS} to determine that
\begin{equation*}
\begin{gathered}
\rho_n \to \rho \text{ in }C_w([0, T]; L^p(\Omega)) \text{ for } 1 \leq p < \infty,\\
\rho_n \vu_n \to \rho \vu \text{ in }C_w(0, T; L^r(\Omega)) \text{ for } 1 \leq r < 2,\\
\rho_n \vu_n \otimes \vu_n \rightharpoonup \rho \vu \otimes \vu \text{ in } L^2(0, T; L^r(\Omega)) \text{ for some } r > 1.
\end{gathered}
\end{equation*}
These convergence results are sufficient to demonstrate that $(\rho, \vu, \pi, f, \eta)$ solves the equations \eqref{eq:FBcont}-\eqref{eq:FBdiff} in the sense of distributions. It remains to show that the free boundary conditions are satisfied and that the density satisfies $0 \leq \rho \leq 1$ a.e. in $(0, T)\times  \Omega$.

\subsubsection{Convergence of $(\rho_n - 1)_+ \to 0$}\label{sec:rholim}
Since $\rho_n \geq 0$ a.e., it follows that $\rho \geq 0$ a.e. as well. Then, we need to demonstrate that $(\rho_n - 1)_+ \to 0$. First, we define $\phi_n$ by
\[
\phi_n = (\rho_n - 1)_+,
\]
and note that
\[
\int_\Omega (1 + \phi_n)^{\gamma_n} \mathbbm{1}_{\{\phi_n > 0\}} \leq \int_\Omega \rho_n^{\gamma_n} \leq c\gamma_n.
\]
We recall the inequality
\[
(1 + x)^k \geq 1 + c_p k^p x^p \text{ for } k \text{ sufficiently large, } p > 1, \ x > 0,
\]
and take $k = \gamma_n,$ $x = \phi_n$ to obtain
\[
 c_p \gamma_n^p \int_\Omega\phi_n^p \leq |\Omega| c_p \gamma_n^p \int_\Omega \phi_n^p \leq \int_\Omega (1 + \phi_n)^{\gamma_n} \leq c \gamma_n.
\]
This yields
\[
\int_\Omega \phi_n^p \leq \frac{c}{c_p \gamma_n^{p - 1}}.
\]
Taking $n \to \infty$, we have
\[
(\rho_n - 1)_+ \to 0 \text{ in }L^\infty(0, T; L^p(\Omega)) \text{ for all } 1 \leq p < \infty.
\]
Thus, $0 \leq \rho \leq 1$ pointwise a.e.

\subsubsection{Free Boundary Conditions} \label{sec:FBC} 
We now set out to prove that the free boundary conditions \eqref{eq:FBC1}-\eqref{eq:FBC3} are satisfied. First, we note that the conditions \eqref{eq:FBC2} and \eqref{eq:FBC3} are equivalent to the single condition 
\begin{equation} \label{eq:FBC4}
\rho \pi = \pi \geq 0 \text{ a.e. in }(0, T)\times \Omega,
\end{equation}
since the pressure vanishes in the region $\{\rho <1\}$. Then, we set
\begin{equation*}
s_n= \rho_n \log \rho_n, \ \ s = \rho \log \rho.
\end{equation*}
Then, the continuity equation \eqref{eq:cont} yields
\begin{equation}\label{eq:5}
\partial_t s_n + \Div_x(s_n \vu_n) + (\Div_x \vu_n) \rho_n = 0.
\end{equation}
Applying the operator $(-\Delta)^{-1}\Div_x $ to the momentum equation \eqref{eq:mom} yields
\begin{equation}\label{eq:4}
\partial_t (\mathcal{A}_i[\rho_n \vu_n^i]) - \mathcal{R}_{ij}[\rho_n \vu_n^i \vu_n^j] + \rho_n^{\gamma_n} + \mathcal{R}_{ij}[\mathbb{S}_n^{ij}] = - \mathcal{R}_{ij}[\mathbb{T}_n^{ij}],
\end{equation}
where we have utilized the Riesz-type operators $\mathcal{A}, \mathcal{R}$. From the properties of the Riesz operator $\mathcal{R}$, it follows that
\[
\sum_{i,j = 1}^3 \mathcal{R}_{ij}[\mathbb{S}_n^{ij}] =\left (\munb - \frac{1}{3} \muns\right) \Div_x \vu_n + \sum_{i,j = 1}^3 \left( \muns \mathcal{R}_{ij}[\partial_j \vu_n^i] + R_n^{ij}\right)
\]
in the sense of distributions, where $R_n^{ij} = \mathcal{R}_{ij}[\muns \partial_j \vu_n^i] - \muns\mathcal{R}_{ij}[\partial_j \vu_n^i]$ is a commutator. The symmetry of the Riesz operator implies that
\[
\sum_{i,j = 1}^3 \mathcal{R}_{ij}[\partial_j \vu_n^i]  = \Div_x \vu_n,
\]
so \eqref{eq:4} becomes
\begin{align*}
\begin{split}
&\partial_t (\mathcal{A}_i[\rho_n \vu_n^i]) - \mathcal{R}_{ij}[\rho_n \vu_n^i \vu_n^j] + \rho_n^{\gamma_n} + \left(\frac{2}{3}\muns + \munb\right) \Div_x \vu \\
& \quad = -R^{ij}_n - \mathcal{R}_{ij}[\mathbb{T}_n^{ij}].
\end{split}
\end{align*}
Multiplying by $\rho_n$ and comparing to \eqref{eq:5} yields
\begin{align*}
\begin{split}
&\left(\munb + \frac{2}{3} \muns\right)\left[\partial_t s_n + \Div_x (s_n \vu_n)\right] + \rho_n^{\gamma_n + 1} \\
& \quad = \rho_n \mathcal{R}_{ij}[\mathbb{T}_n^{ij}] + \partial_t \left(\rho_n\mathcal{A}_{i}[\rho_n \vu_n^i]\right) + \Div_x \left(\rho_n \vu_n\mathcal{A}_i[\rho_n \vu_n^i]\right)\\
& \quad\quad + \rho_nQ_n^{ij} + \rho_n R_n^{ij},
\end{split}
\end{align*}
where $Q_n^{ij}$ is the commutator
\[
Q_n^{ij} = \vu_n^i \mathcal{R}_{ij}[\rho_n \vu_n^j] - \mathcal{R}_{ij}[\rho_n \vu_n^i \vu_n^j].
\]
Taking the limit as $n \to \infty$ yields
\begin{align*}
\begin{split}
&\left(\mub + \frac{2}{3} \mus\right)\left(\partial_t \overline{s} + \Div_x (\overline{s} \vu)\right) + \overline{\rho^{\gamma + 1}} \\
& \quad = \rho\mathcal{R}_{ij}[\mathbb{T}^{ij}]  + \partial_t \left(\rho\mathcal{A}_i[\rho\vu^i]\right) + \Div_x \left(\rho \vu\mathcal{A}_i[\rho \vu^i]\right)\\
& \quad\quad + \rho Q^{ij} + \rho R^{ij}.
\end{split}
\end{align*}
Here, we have used the convergence results stated in the previous section, along with uniform bounds on $\partial_t \rho_n$, $\partial_t(\rho_n \vu_n)$ and the following compensated compactness lemma (Lemma 3.3 in \cite{LionsMasmoudi-1999}).
\begin{lemma}\label{lem:comp}
Suppose that $\{g_n\} \subset L^{p_1}(0, T; L^{p_2}(\Omega))$, $\{h_n\} \subset L^{q_1}(0, T; L^{q_2}(\Omega))$ are two sequences such that $g_n \rightharpoonup g$ in $L^{p_1}(0, T; L^{p_2}(\Omega))$ and $h_n \rightharpoonup h$ in $L^{q_1}(0, T; L^{q_2}(\Omega))$, where $1 \leq p_1, p_2 \leq \infty$, and $\frac{1}{p_1} + \frac{1}{q_1} = \frac{1}{p_2} + \frac{1}{q_2} = 1$. In addition, assume that
\[
\partial_t g_n \text{ is uniformly bounded in } \mathcal{M}(0, T; W^{-m,1}(\Omega)) \text{ for some } m \geq 0,
\]
and
\[
h_n \text{ is uniformly bounded in }L^1(0, T; H^s(\Omega)) \text{ for some }s > 0.
\]
Then, $g_n h_n \rightharpoonup gh$ in the sense of distributions.
\end{lemma}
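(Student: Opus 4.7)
The plan is to prove this via spatial mollification of $g_n$ combined with the Aubin-Lions-Simon compactness lemma, which is the standard route for compensated-compactness statements of this type. Fix a test function $\phi \in C_c^\infty((0,T)\times\Omega)$; it suffices to show $\int g_n h_n \phi \to \int g h \phi$. Let $\omega_\delta$ be a standard radial mollifier on $\R^3$ supported in $B(0,\delta)$, and, after a bounded Lipschitz extension of $g_n$ from $\Omega$ to $\R^3$, set $g_n^\delta := g_n *_x \omega_\delta$. Decompose
\[
\int g_n h_n \phi \, dx\,dt = \int g_n^\delta h_n \phi \, dx\,dt + \int (g_n - g_n^\delta) h_n \phi \, dx\,dt,
\]
with the intention of passing to the limit first in $n\to\infty$ and then in $\delta\to 0$.

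For the first piece, with $\delta > 0$ fixed, mollification supplies arbitrary spatial smoothness, so $g_n^\delta$ is uniformly (in $n$) bounded in $L^{p_1}(0,T; W^{1,p_2}(\Omega))$. Moreover, since convolution against a smooth compactly supported kernel maps $W^{-m,1}$ continuously into $L^1$ with norm depending only on $\delta$ and $m$, the time derivative $\partial_t g_n^\delta = (\partial_t g_n) *_x \omega_\delta$ is uniformly bounded in $\mathcal{M}(0,T; L^1(\Omega))$. The Aubin-Lions-Simon lemma applied along the chain $W^{1,p_2}(\Omega) \hookrightarrow\hookrightarrow L^{p_2}(\Omega) \hookrightarrow L^1(\Omega)$ then yields $g_n^\delta \to g *_x \omega_\delta$ strongly in $L^{p_1}(0,T; L^{p_2}(\Omega))$. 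Combined with the weak convergence $h_n \weak h$ in the Hölder-conjugate space $L^{q_1}(0,T; L^{q_2}(\Omega))$, this gives $\int g_n^\delta h_n \phi \to \int (g *_x \omega_\delta) h \phi$ as $n\to\infty$, and the classical property $g *_x \omega_\delta \to g$ strongly in $L^{p_1}L^{p_2}$ as $\delta \to 0$ handles the remaining passage.

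For the error term, I would transfer the mollification onto the $h_n$ factor by the self-adjointness of convolution,
\[
\int (g_n - g_n^\delta) h_n \phi \, dx = \int g_n \bigl[h_n\phi - (h_n\phi) *_x \check\omega_\delta\bigr] dx,
\]
and exploit the uniform $L^1(0,T; H^s(\Omega))$ bound on $h_n$. The Fourier-side estimate $|1 - \hat\omega_\delta(\xi)| \lesssim \min(1, (\delta|\xi|)^s)$ combined with Plancherel yields, for a suitable $s' > 0$, the uniform bound
\[
\|h_n(t) - h_n(t) *_x \omega_\delta\|_{L^{q_2}(\Omega)} \lesssim \delta^{s'} \|h_n(t)\|_{H^s(\Omega)},
\]
after invoking the Sobolev embedding $H^s \hookrightarrow L^{q_2}$ or interpolating between the uniform $L^1 H^s$ and $L^{q_1}L^{q_2}$ bounds on $h_n$. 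Pairing by Hölder in time and space with the uniform $L^{p_1}L^{p_2}$ bound on $g_n$ then gives $\sup_n |\int (g_n - g_n^\delta) h_n \phi \, dx\,dt| \lesssim \delta^{s'} \to 0$, uniformly in $n$, which closes the argument.

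The principal obstacle I foresee is reconciling the general exponent $q_2 \in (1,\infty)$ with the $L^2$-based Sobolev regularity of $h_n$: when $q_2 \neq 2$, the clean Fourier bound has to be converted to an $L^{q_2}$ statement, which requires either a Sobolev embedding $H^s \hookrightarrow L^{q_2}$ (imposing a mild lower bound on $s$ depending on the spatial dimension) or a careful interpolation to recover a genuine algebraic rate in $\delta$ while still pairing with $g_n$ through the Hölder-conjugate time integrability $q_1$. A secondary technical point is carrying out the spatial mollification cleanly on the Lipschitz domain $\Omega$ near $\partial\Omega$; this is handled by a standard bounded extension operator, which commutes with $\partial_t$ and thus preserves the time-derivative estimate automatically.
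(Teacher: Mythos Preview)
The paper does not prove this lemma; it is quoted as Lemma~3.3 of Lions--Masmoudi~\cite{LionsMasmoudi-1999} and invoked as a black box in Section~\ref{sec:FBC}, so there is no in-paper argument to compare against. Your spatial-mollification-plus-Aubin--Lions--Simon scheme is a standard route for compensated-compactness statements of this type and is sound. The two obstacles you flag---converting the $L^2$-based rate $\|h - h*\omega_\delta\|_{L^2} \lesssim \delta^s \|h\|_{H^s}$ to an $L^{q_2}$ estimate when $q_2 \neq 2$, and reconciling the $L^1$-in-time $H^s$ bound with the $L^{q_1}$-in-time pairing---are the genuine technical points, and your proposed interpolation between the $L^{q_1}L^{q_2}$ and $L^1 H^s$ bounds on $h_n$ is the correct fix. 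One small addition: at the endpoint $p_1 = \infty$, Simon's lemma with a measure-valued time derivative yields strong compactness of $g_n^\delta$ only in $L^r(0,T; L^{p_2})$ for finite $r$; this still suffices since weak $L^1$-in-time convergence of $h_n$ (the conjugate case $q_1 = 1$) entails equi-integrability via Dunford--Pettis, and a Vitali-type argument then closes the product limit.
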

Additionally, the convergence of $\rho_n R_n^{ij} \rightharpoonup\rho R^{ij}$ and $\rho_n Q_n^{ij} \rightharpoonup\rho Q^{ij}$ is obtained by following the procedure in \cite{FLS}, \cite{LionsMasmoudi-1999}. Specifically, we use the properties of the Riesz operator $\mathcal{R}$ to demonstrate that the commutators $Q_n^{ij}, R_n^{ij}$ are uniformly bounded in some space $L^1(0, T; W^{s, p}(\Omega))$, with $s > 0, p> 1$ and use Lemma \ref{lem:comp} (or a variant in the case that $p < 2$) to prove weak convergence of the products $\rho_n R_n^{ij}, \rho_n Q_n^{ij}$. 

The same procedure applied to the limiting equations \eqref{eq:FBcont} and \eqref{eq:FBmom} yields
\begin{align*}
\begin{split}
&\left(\mub + \frac{2}{3} \mus\right)\left(\partial_t s + \Div_x (s \vu)\right) + \rho \pi \\
& \quad = \rho\mathcal{R}_{ij}[\mathbb{T}^{ij}]  + \partial_t \left(\rho\mathcal{A}_i[\rho\vu^i]\right) + \Div_x \left(\rho \vu\mathcal{A}_i[\rho \vu^i]\right)\\
& \quad\quad + \rho Q^{ij} + \rho R^{ij}.
\end{split}
\end{align*}
Comparison of the equations for $s$ and for $\overline{s}$ yields
\begin{equation*}
\left(\mub + \frac{2}{3} \mus\right)\left(\partial_t (\overline{s} - s) + \Div_x ((\overline{s} - s)\vu)\right) = \rho \pi -  \overline{\rho^{\gamma + 1}}.
\end{equation*}
Due to the growth constraints on $\mu^S$ and $\mu^B$ we have $(\mub + \frac{2}{3} \mus)^{-1}$ integrable, so we can write
\begin{equation} \label{eq:6}
\partial_t (\overline{s} - s) + \Div_x ((\overline{s} - s)\vu)=\left(\mub + \frac{2}{3} \mus\right)^{-1} \left( \rho \pi -  \overline{\rho^{\gamma + 1}}\right).
\end{equation}
Noting that
\[
\rho^{\gamma_n} \to \mathbbm{1}_{\{\rho = 1\}} \text{ in }L^p((0, T)\times \Omega) \text{ for all }1 \leq p < \infty,
\]
it follows that 
\[
\rho^{\gamma_n}(\rho_n - \rho) \rightharpoonup 0,
\]
and thus
\begin{equation*}
\rho \pi - \overline{\rho^{\gamma + 1}} = \overline{ \left((\rho - \rho_n) \rho_n^{\gamma_n}\right)} = \overline{ \left((\rho - \rho_n)(\rho_n^{\gamma_n} - \rho^{\gamma_n})\right)}  \geq 0.
\end{equation*}
Integrating \eqref{eq:6} over $\Omega$ yields
\begin{equation} \label{eq:7}
\partial_t \int_\Omega (\overline{s} - s) = \int_\Omega \left(\mub + \frac{2}{3} \mus\right)^{-1} \left( \rho \pi -  \overline{\rho^{\gamma + 1}}\right) \leq 0.
\end{equation}
Since $s$ is concave function, we must have $s \leq \overline{s}$. Additionally, due to the strong convergence assumption $\rho_{n,0} \to \rho_0$ in $L^1(\Omega)$, it follows that $(\overline{s} - s)|_{t = 0} = 0$, and thus  \eqref{eq:7} implies that $\overline{s} \leq s$, and $\overline{s} = s$ at all times $t$. From \eqref{eq:6} it follows that $\rho \pi = \overline{\rho^{\gamma + 1}}$, since $(\mu^B + \frac{2}{3}\mu^S) > 0$.

The equality $\overline{s} = s$, along with the strong convergence of $\rho_{n,0} \to \rho_0$ in $L^1(\Omega)$, gives a pointwise a.e. convergence result for $\rho_n$, since $s$ is a convex function. Thus, $\rho_n \to \rho$ in $L^p((0, T)\times \Omega)$ for all $1 \leq p < \infty$. Following the strategies outlined in \cite{LionsMasmoudi-1999}, we can also obtain the strong convergence results
\begin{gather*}
\rho_n \vu_n \to \rho \vu \text{ in } L^p(0, T; L^r(\Omega)) \text{ for all }1 \leq p < \infty, 1 \leq r < 2,\\
\rho_n \vu_n \otimes \vu_n \to \rho \vu \otimes \vu \text{ in }L^p(0, T; L^1(\Omega)) \text{ for all } 1 \leq p < \infty. 
\end{gather*}
Next, we fix $\epsilon > 0$. For sufficiently large $n$, it follows that
\[
\rho_n^{\gamma_n + 1} \geq \rho_n^{\gamma_n} - \epsilon,
\]
and taking the weak limit gives
\[
\rho \pi = \overline{\rho_n^{\gamma_n + 1}} \geq \overline{\rho_n^{\gamma_n}} - \epsilon = \pi - \epsilon.
\]
Formally, since $\rho \leq 1$ it follows that $\rho \pi \leq \pi$. However, since $\pi \in \mathcal{M}$ the product $\rho \pi$ is not defined a.e., so we have to make sense of the inequality $\rho \pi \leq \pi$. To address this issue we use mollifiers to define sequences of smooth approximating functions $\pi_\epsilon, \rho_\epsilon$. We can write
\[
(\rho - 1)\pi = (\rho_\epsilon - 1)\pi_\epsilon + (\rho - \rho_\epsilon)\pi_\epsilon + (\rho - 1 )(\pi - \pi_\epsilon)
\]
Taking $\epsilon \to 0$ and using $0 \leq \rho \leq 1$ yields $(\rho - 1)\pi \leq 0$, and thus $\rho \pi = \pi$. For more details, see \cite{LionsMasmoudi-1999}.

It remains to show that the incompressibility condition $\Div_x \vu = 0$ is satisfied a.e. in $\{\rho  = 1\}$, 
which is a result of the following lemma (Lemma 2.1 in \cite{LionsMasmoudi-1999}). 

\begin{lemma}
Let $\vu \in L^2(0, T; H^1_{\text{loc}}(\Omega))$ and $\rho \in L_{\text{loc}}^2((0, T)\times \Omega)$ such that $\partial_t \rho + \Div_x(\rho \vu) = 0$ in $(0, T)\times \Omega$, in the sense of distributions, and $\rho(0) = \rho_0$. Then, the following assertions are equivalent.
\begin{itemize}
\item [(i)] $\Div_x \vu = 0$ a.e. on $\{\rho \geq 1\}$ and $0 \leq \rho_0 \leq 1$.
\item [(ii)] $0 \leq \rho \leq 1$. 
\end{itemize}
\end{lemma}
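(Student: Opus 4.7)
The plan is to invoke the DiPerna--Lions theory of renormalized solutions. Since $\vu \in L^2(0,T; H^1_{loc}(\Omega))$ and $\rho \in L^2_{loc}((0,T)\times\Omega)$, the standard commutator argument guarantees that for any Lipschitz $\beta:\R\to\R$ (and, by a smoothing approximation, for any $\beta\in C^1$ with appropriate growth), the renormalized continuity equation
\begin{equation*}
\partial_t \beta(\rho) + \Div_x(\beta(\rho)\vu) + \bigl(\rho\,\beta'(\rho) - \beta(\rho)\bigr)\,\Div_x \vu = 0
\end{equation*}
holds in $\mathcal{D}'((0,T)\times\Omega)$. Both implications then reduce to judicious choices of $\beta$ combined with the hypotheses.

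For (i) $\Rightarrow$ (ii), I first take $\beta(s)=s_-$. A direct computation gives $s\,\beta'(s) - \beta(s) = 0$, so $\rho_-$ satisfies the homogeneous transport equation with vanishing initial datum $\rho_{0,-}=0$, which yields $\rho \geq 0$. Next, I take $\beta(s)=(s-1)_+$; now $s\,\beta'(s) - \beta(s) = \mathbbm{1}_{\{s>1\}}$, and the hypothesis $\Div_x\vu = 0$ a.e.\ on $\{\rho\geq 1\}\supseteq\{\rho>1\}$ annihilates the source, yielding a homogeneous transport equation for $(\rho-1)_+$ with zero initial datum, hence $\rho \leq 1$.

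For (ii) $\Rightarrow$ (i), the constraint $0\leq\rho_0\leq 1$ follows from $0\leq\rho\leq 1$ together with $\rho(0)=\rho_0$. To extract the incompressibility on $\{\rho=1\}$, I insert $\beta(s)=s^p$ ($p>1$), giving $s\,\beta'(s) - \beta(s) = (p-1)s^p$ and hence
\begin{equation*}
\frac{1}{p-1}\bigl[\partial_t \rho^p + \Div_x(\rho^p\vu)\bigr] + \rho^p\,\Div_x\vu = 0.
\end{equation*}
Pairing against $\phi \in C_c^\infty((0,T)\times\Omega)$ and sending $p \to \infty$, the first two terms are uniformly bounded (because $0\leq\rho^p\leq 1$ and $\vu\in L^2_{loc}$), so the prefactor $\tfrac{1}{p-1}$ kills them. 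Meanwhile $\rho^p \to \mathbbm{1}_{\{\rho=1\}}$ pointwise a.e.\ and in every $L^q$ with $q<\infty$ by dominated convergence, so the limit reads
\begin{equation*}
\int_0^T\!\!\int_\Omega \phi\,\mathbbm{1}_{\{\rho=1\}}\,\Div_x \vu \dxdt = 0
\end{equation*}
for all admissible $\phi$. Since $\rho \leq 1$ gives $\{\rho\geq 1\}=\{\rho=1\}$, this is exactly the claimed incompressibility.

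The main technical obstacle is the justification of the renormalized equation for the non-$C^1$ choices $s_-$ and $(s-1)_+$; I would handle this by convolving $\beta$ with a mollifier to obtain $\beta_\delta \in C^1$ with $\|\beta_\delta'\|_\infty$ uniformly bounded, writing the renormalized equation for $\beta_\delta$, and passing $\delta \to 0$ via dominated convergence, using that $\rho\,\Div_x\vu \in L^1_{loc}$ (since $\rho \in L^2_{loc}$ and $\Div_x\vu \in L^2_{loc}$). A similar smoothing is required for $s^p$ globally on $\R$, but on the relevant range $0\leq\rho\leq 1$ the function is smooth and bounded, so the only subtlety is the $p\to\infty$ passage, which is routine given the $L^\infty$ bound on $\rho$.
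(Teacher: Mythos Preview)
The paper does not actually prove this lemma: it is quoted verbatim as Lemma~2.1 of Lions--Masmoudi \cite{LionsMasmoudi-1999} and simply invoked, with a remark that the regularity hypothesis on $\vu$ can be relaxed to $L^2(0,T;W^{1,p})$ with $p>1$. So there is no in-paper proof to compare against; your sketch is, in fact, the standard route taken in \cite{LionsMasmoudi-1999}, namely DiPerna--Lions renormalization with the choices $\beta(s)=s_-$, $\beta(s)=(s-1)_+$ for (i)$\Rightarrow$(ii), and $\beta(s)=s^p$ with $p\to\infty$ for (ii)$\Rightarrow$(i).

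One point worth tightening: after obtaining the homogeneous transport equation $\partial_t\beta(\rho)+\Div_x(\beta(\rho)\vu)=0$ with $\beta(\rho)|_{t=0}=0$, you assert ``which yields $\beta(\rho)=0$''. Under the purely local hypotheses stated in the lemma ($\vu\in L^2(0,T;H^1_{\text{loc}})$, $\rho\in L^2_{\text{loc}}$, no boundary condition, no global integrability), uniqueness for the transport equation is not automatic---one needs either to integrate over $\Omega$ using $\vu|_{\partial\Omega}=0$ and $\beta(\rho)\in L^1(\Omega)$, or to introduce a spatial cutoff and control the boundary flux. In the present paper's setting ($\Omega$ bounded, $\vu\in L^2(0,T;W_0^{1,r})$, $\rho\in L^\infty(0,T;L^p)$ for all $p$) this is immediate, but as a proof of the lemma \emph{as stated} you should spell out where the global information enters. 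The same comment applies to the $p\to\infty$ limit: the distributional pairing $\langle\partial_t\rho^p+\Div_x(\rho^p\vu),\phi\rangle$ is bounded uniformly in $p$ precisely because $0\le\rho^p\le 1$ and $\vu\in L^1_{\text{loc}}$, which you state but which again relies on (ii).
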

We note that this lemma only strictly applies in the case when $0 \leq \omega < \frac{5}{3}$, due to a loss of regularity for $\vu$ when $-\frac{4}{3} < \omega \leq 0$. However, it can be demonstrated that the lemma still applies in the case when $\vu \in L^2(0, T; W^{1, \frac{4}{2 + |\omega|}}(\Omega))$, for example, since $\frac{4}{2 + |\omega|} > 1$. This concludes the proof of Theorem \ref{thm:FBexist}.

\section{Weak Sequential Stability}\label{SWSS}
In this section we present the weak sequential stability of the family of dissipative (finite-energy) weak solutions to the free boundary problem $(\vc{P_F}).$
The results is presented below.

\begin{theorem}[Weak sequential stability]\label{WSS}
Let $\{ (\rho_n, \vu_n, \psi_n, \eta_n)\}_{n \in \mathbb{N}}$ be a sequence of dissipative (finite energy) weak solutions in the sense of Definition \ref{def:FBsol} associated with the initial data $\{(\rho_{0,n}, \vu_{0,n}, \psi_{0,n}, \eta_{0,n})\}_{n \in \mathbb{N}}$ satisfying:
\begin{enumerate}[\quad$\star$]
\item 
$\rho_{0,n} \ge 0$ a.e. in $\Omega$, $\rho_{0,n} \to \rho_0$ strongly in $L^{1}(\Omega)$;
\smallskip

\item
$\vu_{0,n} \to \vu_0$ in $L^r(\Omega; \mathbb{R}^3)$ for some $r>1$ such that $\rho_{0,n}|u_{0,n}|^2 \to \rho_0 |u_0|^2$ strongly in $L^1(\Omega)$;
\smallskip

\item
$\psi_{0,n} \ge 0$ a.e. in $\Omega \times D, \,\,\, \psi_{0,n} \to \psi_0, \,\,   \psi_{0,n}\left(\log \frac{\psi_{0,n}}{M}\right) \to \psi_0 \left(\log \frac{\psi_{0,n}}{M}\right)$ strongly in $L^1(\Omega \times D);$
\smallskip

\item
$\eta_{0,n} =\int_D \psi_{0,n} dq \to \eta_0$ strongly in $L^2(\Omega).$
\end{enumerate}
Let $\vc{f} \in L^{\infty}((0,T)\times \Omega; \mathbb{R}^3).$ Suppose that the parameter $\omega$ in \eqref{eq:omegacond} satisfy 
\begin{equation}
-\frac{4}{3}<  \omega< \frac{5}{3},
\end{equation}
Then, there exists a subsequence  such that 
$(\rho_n, \vu_n, \psi_n, \eta_n) \to (\rho, \vu, \psi, \eta),\,\, \mbox{as}\,\, n\to \infty,$ in the sense of distributions, where the limit $(\rho, \vu, \psi, \eta)$ is a dissipative
finite energy solution in the sense of Definition \ref{def:FBsol} associated with the initial data $(\rho_0, \vu_0, \psi_0, \eta_0).$
\end{theorem}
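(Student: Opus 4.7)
The strategy is to mimic the convergence scheme of Section \ref{sec:stabproof}, now applied to a sequence $(\rho_n,\vu_n,\psi_n,\eta_n)$ that already solves the free boundary problem $(\vc{P_F})$. This simplifies matters in some respects---the constraint $0\le\rho_n\le 1$ and the measure-valued nature of $\pi_n$ are built in, so the $L^1$ bound on $\rho_n^{\gamma_n}$ and the limit $(\rho_n-1)_+\to 0$ of Sections \ref{sec:L1} and \ref{sec:rholim} are not needed---but it leaves one central obstruction: identifying the free boundary condition $\pi(\rho-1)=0$ in the limit, since $\pi$ is only a Radon measure and $\rho$ only an $L^\infty$ function.

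First, I would derive uniform a priori estimates. The convergence hypotheses on the initial data imply that the initial energies $E_{n,0}$ are bounded in $n$, and the dissipative energy inequality \eqref{eq:FBenergy} then yields all the bounds listed in \eqref{eq:unifbounds}, with $\vu_n$ controlled in $L^2(0,T;W_0^{1,2})$ for $\omega\ge 0$ and in $L^{4/(2+|\omega|)}(0,T;W_0^{1,12/(6+|\omega|)})$ for $\omega\le 0$. A uniform bound on $\pi_n$ in $\M((0,T)\times\Omega)$ is obtained by testing the momentum equation against a Bogovskii-type function $\mathcal{B}(\rho_n-\overline{\rho_n})$ as in Section \ref{sec:L1}, using $\pi_n\ge 0$ together with the constraint $\pi_n(\rho_n-1)=0$.

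Standard Aubin-Lions compactness arguments then extract subsequences yielding the convergences of \eqref{eq:conv}: $\rho_n\to\rho$ in $C_w([0,T];L^p(\Omega))$, $\rho_n\vu_n\to\rho\vu$ in $C_w([0,T];L^r(\Omega))$, $\vu_n\weak\vu$ weakly in the appropriate Sobolev space, $\eta_n\to\eta$ strongly in $C_w([0,T];L^2)$ and weakly in $L^2(0,T;H^1)$, $\psi_n\to\psi$ strongly in $L^1(0,T;L^1(\Omega\times D))$ with weak convergence of the Maxwellian-weighted gradients, $\pi_n\weakstar\pi$ in $\M$, and $(\mu^S_n,\mu^B_n)\to(\mu^S,\mu^B)$ strongly in view of the growth conditions \eqref{eq:omegacond} and the strong convergence of $\eta_n$. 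Passage to the limit in the continuity, momentum, Fokker-Planck, and polymer-density equations---including the quadratic convective term $\rho_n\vu_n\otimes\vu_n$ and the drift/diffusion terms of \eqref{eq:FBFP}---proceeds exactly as in Section \ref{sec:conv}, leveraging the kinetic compactness results of \cite{FLS}.

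The main obstacle is verifying $\pi(\rho-1)=0$ in the limit. This is achieved by the effective viscous flux identity of Section \ref{sec:FBC}: apply $(-\Delta)^{-1}\Div_x$ to the momentum equation, multiply by $\rho_n$, and combine with the renormalized continuity equation for $s_n=\rho_n\log\rho_n$. The constraint $\rho_n\pi_n=\pi_n$ now plays the role formerly played by $\rho_n\cdot\rho_n^{\gamma_n}=\rho_n^{\gamma_n+1}$. Passing to the limit in the Riesz commutator terms via Lemma \ref{lem:comp} as in \cite{LionsMasmoudi-1999} yields
\begin{equation*}
\Pt(\overline{s}-s)+\Div_x((\overline{s}-s)\vu)=\Bigl(\mub+\tfrac{2}{3}\mus\Bigr)^{-1}(\rho\pi-\pi)\le 0,
\end{equation*}
the inequality following from $0\le\rho\le 1$ and $\pi\ge 0$, interpreted via mollification as in Section \ref{sec:FBC}. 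Combined with the convexity-induced inequality $\overline{s}\ge s$ and the initial identity $\overline{s}|_{t=0}=s|_{t=0}$ (which follows from the strong convergence $\rho_{0,n}\to\rho_0$ in $L^1(\Omega)$), this forces $\overline{s}=s$ a.e., giving pointwise convergence $\rho_n\to\rho$ and the desired identity $\rho\pi=\pi$. The divergence-free condition on $\{\rho=1\}$ then follows from the Lions-Masmoudi lemma quoted at the end of Section \ref{sec:FBC}, and the dissipative energy inequality \eqref{eq:FBenergy} for the limit is recovered by lower semicontinuity exactly as in Section \ref{sec:existproof}, handling the viscous dissipation via the functionals $g(\vu_n,\mu^S_n),h(\vu_n,\mu^B_n)$ and the entropic term $\int M\mathcal{F}(\tilde\psi_n)$ via Fatou.
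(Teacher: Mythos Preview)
Your proposal is correct and takes essentially the same approach as the paper: follow the proof of Theorem \ref{thm:stab} with the simplifications afforded by $0\le\rho_n\le 1$, use the Bogovskii/multipliers technique to get the uniform $\M$-bound on $\pi_n$, and recover the energy inequality via lower semicontinuity using the strong convergence of the initial data. Your treatment is in fact more detailed than the paper's terse sketch---in particular, your explicit observation that the constraint $\rho_n\pi_n=\pi_n$ replaces the identity $\rho_n\cdot\rho_n^{\gamma_n}=\rho_n^{\gamma_n+1}$ in the effective viscous flux step, leading to $(\rho-1)\pi\le 0$ on the right-hand side, spells out what the paper leaves implicit under ``following the line of argument of Theorem \ref{thm:stab}.''
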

\begin{proof}
Consider a sequence of the dissipative  (finite energy) weak solutions satisfying the assumptions in Theorem \ref{WSS} as well as the energy inequality 
 \eqref{eq:FBenergy}.
The result is obtained by following the line of argument of Theorem  \ref{thm:stab} in Section \ref{sec:proof}, with several exceptions:
\begin{enumerate}[\quad$\star$]
\item Higher integrability of the fluid pressure is established by using the bound $0 \leq \rho_n \leq 1$, which yields a uniform bound on $\rho_n$ in $L^p((0, T)\times \Omega)$ for any $1 \leq p \leq \infty$. Thus, $\rho_n \rightharpoonup \rho$ in $L^p((0, T)\times \Omega)$.
\smallskip
\item The multipliers method employed in the proof of Theorem \ref{thm:stab} is adapted to demonstrate that the $L^1$ bound on $\pi_n$ is controlled by the initial data, thus obtaining a uniform bound on $\pi_n$. This demonstrates that $\pi_n \rightharpoonup \pi$ in $\mathcal{M}((0, T)\times \Omega)$.
\smallskip
\item The strong convergence of the initial data is used to demonstrate that the limiting solutions satisfies the energy inequality \eqref{eq:FBenergy}.
\end{enumerate}
\end{proof}

\end{document}